\documentclass[11pt]{article}
\usepackage{fullpage}
\usepackage{epsfig}
\usepackage{graphics}
\usepackage{latexsym}
\usepackage{amsmath}
\usepackage{amsfonts}
\usepackage{amssymb}
\usepackage{mathrsfs}
\usepackage{amsthm}
\usepackage{xspace}
\usepackage{epstopdf}
\usepackage{float}
\usepackage{hyperref}
\usepackage{mathtools}

\numberwithin{equation}{section}
\usepackage[ruled,vlined]{algorithm2e}
\SetArgSty{textrm}

\usepackage[english]{babel}
\usepackage[nottoc]{tocbibind}

\usepackage{caption}
\usepackage{subcaption}

\usepackage[usenames,dvipsnames]{color}
\usepackage{pifont}
\usepackage{booktabs}
\usepackage{multirow}

\newcommand{\redcomment}[1]{\textcolor{red}{\textrm{#1}}}

\usepackage{amsthm}     
\theoremstyle{plain}     
\newtheorem{theorem}{Theorem}

\newtheorem{corollary}{Corollary}

\newtheorem{lemma}{Lemma}
\newtheorem{proposition}{Proposition}
\theoremstyle{definition} 

\newtheorem{observation}{Observation}

\theoremstyle{remark} 

\makeatletter
\@addtoreset{equation}{section}
\def\section{\@startsection {section}{1}{\z@}{-3.5ex plus -1ex minus
 -.2ex}{2.3ex plus .2ex}{\large\bf}}
\makeatother

\def\bfm#1{\mbox{\boldmath$#1$}}

\def\u{\bfm u}
\def\0{\bfm 0}

\DeclareMathAlphabet{\mathpzc}{OT1}{pzc}{m}{it}

\newcounter{my}

\newcounter{my2}

\newcounter{my3}

\newcounter{my4}

\newcounter{my5}

\newcounter{my6}

\allowdisplaybreaks 

\begin{document}

\title{Randomized Max-Vertex-Coverage Interdiction under Matroid Constraints}


\date{}
\maketitle

\vspace{-3em}
\begin{center}

\author{Changjun Wang$^{1}$,\quad  Chenhao Wang $^{2,3}$\\
${}$\\
$1$ Academy of Mathematics and Systems Science, Chinese Academy of Sciences\\
$2$ Beijing Normal University-Zhuhai\\
$3$ Beijing Normal-Hong Kong Baptist University\\
\medskip
}

\end{center}

\begin{abstract}

We study a class of bilevel interdiction problems in which the follower's optimization problem is computationally intractable. Motivated by network defense applications, we introduce the Randomized Max-Vertex-Coverage Interdiction (RMVCI) problem under matroid constraints. In this zero-sum Stackelberg game, the leader commits to a randomized interdiction strategy over feasible vertex subsets, while the follower, after observing the induced protection probabilities, chooses a matroid-constrained attack to maximize the expected coverage of network edges.

The main challenge stems from the fact that the follower's problem is a matroid-constrained maximum vertex coverage problem and is therefore NP-hard. To address this difficulty, we first develop a general approximation framework for bilevel optimization problems with hard follower responses. The framework is based on replacing the follower's value function by a surrogate objective that approximates the follower's optimal payoff while preserving tractability of the leader's optimization problem.

For the RMVCI problem, we formulate the follower's problem as an integer linear program, establish a tight integrality gap of $4/3$ for its linear relaxation, and derive a polynomial-time $4/3$-approximation algorithm via pipage rounding. We then show that a carefully designed surrogate objective admits a marginal-probability reformulation that transforms the randomized interdiction problem into a tractable optimization problem over the leader's matroid polytope. This yields a polynomial-time $2$-approximation algorithm for RMVCI under general matroid constraints.
Beyond the specific application studied here, our results provide a new perspective on approximation methods for {general} bilevel optimization problems. 
\noindent\textbf{Keywords}: {Bilevel optimization; Network interdiction; Stackelberg games; Matroid optimization; Approximation algorithms.}\\
\noindent\textbf{Mathematics Subject Classification}: 90C27, 68W25
\end{abstract}

\section{Introduction}

Bilevel optimization provides a powerful framework for modeling hierarchical decision-making processes involving two interacting decision makers. In a bilevel optimization problem, a leader first chooses a decision, after which a follower reacts optimally. The leader's objective depends on both decisions, resulting in a nested optimization structure that is often computationally challenging. Bilevel optimization has found numerous applications in transportation, pricing, facility location, energy systems, security planning, and network design; see the monographs of Dempe~\cite{dempe2002foundations} and the survey of Colson et al.~\cite{colson2007overview} for comprehensive overviews.

Among the many application domains of bilevel optimization, \emph{network interdiction} has emerged as one of the most extensively studied. In a network interdiction problem, a defender (leader) allocates limited defensive resources to protect or {interdict} 
a network, while an attacker (follower) subsequently chooses an action that maximizes the resulting damage. Classical examples include  maximum-flow interdiction~\cite{wollmer1964removing,wood1993deterministic}, shortest-path interdiction~\cite{fulkerson1977maximizing,israeli2002shortest}, spanning tree interdiction~\cite{chestnut2017interdicting,frederickson1999increasing,weninger2025interdiction}, matching interdiction~\cite{zenklusen2010matching}, facility coverage interdiction~\cite{church2004identifying}, {maximum-capacity-path interdiction~\cite{eor/TayyebiMS23},} and interdiction of packing linear programs~\cite{dinitz2013packing}. We refer the reader to the recent survey of Smith and Song~\cite{eor/SmithS20} for a broader overview of the area.

Most successful interdiction models share a common feature: the follower's optimization problem is either polynomially solvable or admits a tractable mathematical programming reformulation. This property is crucial because many classical bilevel optimization techniques (e.g., duality-based reformulations, value-function approaches, and KKT-based methods) rely fundamentally on the ability to characterize or solve the follower's problem efficiently. When the lower-level problem becomes NP-hard, however, these techniques often lose their effectiveness, as even evaluating the follower's optimal response may already be computationally intractable.


While several works have studied interdiction models
with NP-hard follower problems,
their approaches are largely divided into two categories.
The first develops exact algorithms based on
problem-specific reformulations and decomposition techniques \cite{lozano2017value,DBLP:journals/mp/CroceS20}. The second establishes approximation guarantees
for particular hard-follower models \cite{DBLP:journals/tcs/ChenZ13,chen2022approximation,linhares2017improved}. Neither direction provides a general framework
for designing approximation algorithms
for bilevel optimization problems with
computationally intractable lower levels.



Consequently, despite the substantial progress made on both exact and problem-specific approximation approaches, a significant methodological gap remains. To the best of our knowledge, there is currently no general approximation paradigm for network interdiction problems with NP-hard follower subproblems. Addressing this gap is particularly important because many natural attack models in network defense lead to hard combinatorial optimization problems at the lower level. The randomized max-vertex-coverage interdiction problem studied in this paper provides a representative example of such a setting and serves as a vehicle for developing a more general approximation methodology for bilevel optimization with hard follower responses.

\subsection{The problem and our results}

We study a new interdiction model, termed the \emph{Randomized Max-Vertex-Coverage Interdiction} (RMVCI) problem under matroid constraints. Let $G=(V,E)$ be an edge-weighted graph. The leader first commits to a randomized protection strategy by selecting a probability distribution over matroid-feasible subsets of vertices. After observing the induced protection probabilities, the follower chooses a matroid-feasible subset of vertices to attack. The follower's objective is to maximize the expected total weight of edges incident to vertices that are simultaneously attacked and left unprotected, whereas the leader seeks to minimize this quantity.

The RMVCI problem combines two sources of complexity.
The follower's optimization problem generalizes the
matroid-constrained maximum vertex coverage problem  \cite{HS23}
and is therefore NP-hard,
while the leader's decision space consists of probability
distributions over exponentially many feasible interdiction sets.
These two difficulties interact through the follower's expected payoff,
rendering both exact optimization and approximation highly nontrivial.
These features make RMVCI a particularly appealing testbed for developing approximation methodologies for bilevel optimization with computationally hard follower responses.



The primary goal of this paper is therefore not merely to study the RMVCI problem itself, {but to develop a broader approach for approximating bilevel optimization problems, particularly those for which the follower's problem is difficult to optimize directly.} 
Our central idea is to replace the follower's exact value function by a carefully designed surrogate objective. Such a surrogate should satisfy two competing requirements: it must approximate the follower's {optimal value} 
within a provable factor, while simultaneously exposing enough structure to make the leader's optimization problem tractable.

To pursue this approach, we begin by studying the follower's optimization problem in isolation. We formulate it as an integer linear program  and analyze its LP relaxation. In particular, we establish a tight integrality gap of $4/3$, which yields a polynomial-time $4/3$-approximation algorithm for the follower. While this analysis provides valuable information about the structure of the lower-level problem, it also reveals that directly employing the LP relaxation as a surrogate within the bilevel framework does not lead to an efficiently solvable leader problem.

This observation motivates the search for a different surrogate objective. We develop a surrogate that captures the essential structure of the follower's payoff while admitting a significantly simpler representation. A key insight is that the resulting surrogate depends only on the marginal interdiction probabilities induced by the leader's randomized strategy. Exploiting this property, we derive a reformulation over the leader's matroid polytope, thereby avoiding explicit optimization over exponentially many mixed strategies. Combining these ingredients yields a polynomial-time $2$-approximation algorithm for the RMVCI problem under general matroid constraints.

\paragraph{Our contributions.}
The main contributions of this paper are summarized as follows.

\begin{itemize}

\item We introduce the Randomized Max-Vertex-Coverage Interdiction problem under matroid constraints, a bilevel interdiction model with an NP-hard follower response.

\item We propose a general approximation framework for bilevel optimization problems with computationally intractable lower levels based on surrogate value-function approximations.

\item We formulate the follower's problem as an integer linear program, prove that its natural LP relaxation has a tight integrality gap of $4/3$, and derive a matching polynomial-time approximation algorithm.

\item We develop a new surrogate objective for the leader's problem and show that it admits a reformulation in terms of marginal interdiction probabilities over the leader's matroid polytope. 

\item By combining these ingredients, we obtain a polynomial-time $2$-approximation algorithm for RMVCI under general matroid constraints.

\end{itemize}

More broadly, our results {on RMVCI} demonstrate that surrogate value-function approximations may provide a useful paradigm for designing approximation algorithms for bilevel optimization problems, particularly when  {the follower's value function is difficult to optimize directly}. 

\paragraph{Organization.}
The remainder of the paper is organized as follows. {Section~\ref{sec:model} introduces the mathematical formulation of RMVCI.} Section~\ref{framework} presents a general approximation framework for bilevel optimization problems. Section~\ref{sec:inner} studies the follower's optimization problem and establishes the tight $4/3$ integrality gap. Section~\ref{sec:out} develops the surrogate reformulation and the 2-approximation algorithm for the leader. 

\subsection{{Related work}}

Bilevel optimization provides a natural modeling framework for hierarchical
decision-making problems in which a leader first commits to a decision and a
follower subsequently optimizes its own objective
\cite{dempe2002foundations,colson2007overview}. Network interdiction is one of
the most prominent classes of bilevel optimization problems. Exact algorithms
for interdiction problems have been widely studied; see, for example,
\cite{eor/SmithS20,lozano2017value}. Approximation algorithms have also been developed for a variety of bilevel and interdiction problems. For example,
Chestnut and Zenklusen \cite{chestnut2017interdicting} give approximation
results for interdicting structured combinatorial optimization problems with
\(\{0,1\}\)-objectives, while Dinitz and Gupta \cite{dinitz2013packing} study
packing interdiction and related partial covering problems. Chen and Zhang
\cite{DBLP:journals/tcs/ChenZ13} study approximation algorithms for the bilevel
knapsack problem. Other examples include maximum-flow interdiction
\cite{wollmer1964removing,wood1993deterministic}, shortest-path interdiction
\cite{fulkerson1977maximizing,israeli2002shortest}, and spanning-tree
interdiction \cite{frederickson1999increasing,weninger2025interdiction}.
These results provide important precedents for approximation in interdiction,
but they are often problem-specific: the approximation guarantee typically
relies on the particular combinatorial structure of the follower problem and
does not directly yield a general surrogate-value-function approach. 


\paragraph{Randomized and continuous interdiction.}
Randomization is an important theme in interdiction models. Bertsimas,
Nasrabadi, and Orlin \cite{bertsimas2016power} investigate the power of
randomization in network interdiction, and Holzmann and Smith
\cite{ior/HolzmannS21} study shortest-path interdiction with randomized
interdiction strategies. In these models, the leader's decision is no longer a
single interdiction set, but a distribution over possible interdiction actions.
This perspective is related to continuous interdiction, where the leader's
decision is represented by fractional or continuous resource allocations.
Carvalho, Lodi, and Marcotte \cite{DBLP:journals/orl/CarvalhoLM18} study a
continuous bilevel knapsack problem, while Tayyebi, Mitra, and Sefair
\cite{eor/TayyebiMS23} consider a continuous maximum-capacity-path interdiction
problem. Both randomized and continuous interdiction enlarge the leader's
decision space beyond deterministic discrete actions, but they lead to
different algorithmic challenges.
In the RMVCI model studied in this paper, the leader randomizes over
matroid-feasible vertex sets, and the follower then solves a
matroid-constrained coverage problem. Thus, the main challenge is not only to
optimize over a distribution with exponentially many possible supports, but
also to control the effect of this distribution on an NP-hard follower value
function.

\paragraph{Matroid interdiction and matroid-constrained optimization.}
Matroids provide a unifying abstraction for many independence systems in
combinatorial optimization. They have long played an important role in exact
and approximation algorithms, partly because matroid polytopes admit strong
structural descriptions and efficient optimization or separation procedures
\cite{grotschel1988geometric,cunningham1984testing}. A closely related line of
work is matroid interdiction. Since minimum spanning trees are minimum-weight
bases of graphic matroids, MST interdiction can be viewed as a special case of
matroid interdiction. Weninger and Fukasawa
\cite{weninger2025interdiction} study interdiction of minimum spanning trees
and other matroid bases, showing that matroid interdiction is NP-complete even
for uniform matroids and developing an exact branch-and-bound algorithm based
on dynamic-programming upper bounds. Related work also considers interdiction
under partition matroid structure \cite{ketkov2025class}.
These papers are closely related to the present work through their use of matroidal
structure at the leader and/or follower level, but they differ in a crucial
respect. In classical matroid-basis interdiction, the lower-level problem is a
minimum-weight basis problem and is solvable by a greedy algorithm. In contrast,
in RMVCI the follower faces a matroid-constrained maximum vertex coverage
problem, which is NP-hard. Thus, even evaluating the follower's exact response
is computationally difficult.

\paragraph{Coverage and submodularity.}
The follower's problem in RMVCI is closely connected to maximum coverage and
monotone submodular maximization under matroid constraints. The classical
approximation theory for submodular maximization originates with Nemhauser,
Wolsey, and Fisher \cite{nemhauser1978analysis} and Nemhauser and Wolsey
\cite{NemhauserW78}. Later developments include the multilinear-extension and
continuous-greedy framework of Calinescu et al.
\cite{calinescu2011maximizing}, as well as pipage
rounding \cite{ageev2004pipage}, which is particularly useful for rounding
fractional points in matroid polytopes while preserving approximation
guarantees. Recent work on matroid-constrained vertex-cover-type problems
\cite{HS23} is also relevant to the lower-level combinatorial structure. 

The present work lies at the intersection of these themes. It studies a
randomized network coverage interdiction problem under matroid constraints in which the
follower's exact value function is computationally intractable. The key
methodological distinction is to replace the follower value function by
a surrogate that simultaneously provides a provable approximation to the
follower optimal value and admits a tractable leader-side reformulation. 
This perspective complements existing
exact and problem-specific approximation approaches and provides a framework
for approximating bilevel interdiction problems.

\section{Preliminary and problem formulation}\label{sec:model}
\paragraph{Matroid.} 
A set system $\mathcal M= (U, \mathcal{I})$ on a ground set $U$ (where  $\mathcal I$ is a family of subsets of $U$) is called a \textit{matroid} if it satisfies the following three conditions: 1) $\varnothing\in\mathcal I$,
2) if $B \in \mathcal{I}$ and $A \subseteq B$, then $A \in \mathcal{I}$, and  
3) for any two sets $A, B \in \mathcal{I}$ with $|A| < |B|$, there exists an element $i \in B$ such that $A \cup \{i\} \in \mathcal{I}$.  
The sets in $\mathcal{I}$ are called \textit{independent sets.} 
The \emph{matroid polytope} of $\mathcal M$ is the convex hull of the incidence vectors of  independent sets, or equivalently \cite{edmonds1970}:
\[\mathcal P=\text{conv}\{x\in\{0,1\}^n:A(x)\in \mathcal I\} =\{x\ge 0: \sum_{i\in S}x_i\le r(S), \forall S\subseteq U\},\]
where $A(x)$ denotes the support set $\{i\in U:x_i=1\}$ of vector $x$, and  $r(S)=\max\{|I|:I\subseteq S,I\in\mathcal I\}$ is the rank function, i.e., the size of the largest independent subset of $S$.
As usual, we assume the existence of an \emph{independence oracle} for testing whether a set is independent. Given such an oracle, Cunningham \cite{cunningham1984testing} shows that the matroid polytope has an efficient \emph{separation oracle} that decides whether a given point lies inside the polytope and, if not, provides a separating hyperplane that excludes the point from the polytope.

\paragraph{RMVCI problem under matroid constraints.} 
Let $G=(V,E)$ be a graph with vertex set $V=\{1,\ldots,n\}$ and edge set $E$. Each edge $e=(u,v)\in E$ is associated with a weight $w_e$, and we denote by $e=uv$ for short.  For every subset $V'\subseteq V$ of vertices, let $\delta(V')\subseteq E$ be the set of edges covered by $V'$, that is, an edge $e=uv$ is in $\delta(V')$ if $\{u,v\}\cap V'\neq \varnothing$. Denote by $w(V')=\sum_{e\in \delta(V')}w_e$ the total weight of the edges covered by $V'$, {namely the coverage of $V'$. }   

Let $\mathcal M_l=(V,\mathcal I_l)$, $\mathcal M_f=(V,\mathcal I_f)$ be two matroids both on the ground vertex set $V$, and $\mathcal P_l,\mathcal P_f\subseteq [0,1]^n$ are their corresponding  matroid polytopes. In the RMVCI, we assume both players, the leader and the follower,  are subject to matroid constraints, that is,  for the leader, the feasible vertex subset he can choose to interdict 
must be an independent set of $\mathcal M_l$, and  the follower 
is limited to select an independent set of $\mathcal M_f$.

In the RMVCI,  $\mathcal I_l\subseteq 2^V$ is the set of all feasible subsets of vertices for the leader to interdict, and a \emph{mixed} (or \emph{randomized}) strategy over $\mathcal I_l$ is given by a probability distribution $\pi:\mathcal I_l\to[0,1]$, where $\pi(S)$ is the probability of interdicting the vertex subset $S$. 
Denote the set of all mixed strategies over $\mathcal I_l$ by $\Delta(\mathcal I_l)$.  The leader’s goal is to minimize the follower’s {optimal} expected payoff, expressed as $\Theta(\pi)$, by choosing an optimal probability distribution $\pi\in \Delta(\mathcal I_l)$,
\begin{align}
    \min_{\pi} \quad & \Theta(\pi) \label{eq:inter}\\
    \text{s.t.} \quad & \pi\in \Delta(\mathcal I_l) \nonumber
\end{align}
Once the leader commits to a mixed strategy $\pi$, the follower observes $\pi$ and then selects an independent vertex set of matroid $\mathcal M_f$ to maximize its successful edge-coverage. 
Let $x\in\{0,1\}^n$ denote the follower’s  decision vector {of selected vertices}, where $x_i=1$ indicates that vertex $i\in V$ is selected. The follower then aims to maximize the expected total weight of edges {that are covered} by the selected and uninterdicted vertices, which can be written as 
{\begin{align*}
  \Theta(\pi)=\max_{x}\quad&\sum_{S\in\mathcal I_l}\pi(S)w(A(x)\setminus S)\nonumber\\
   \text{s.t.} \quad & x\in\mathcal P_f\cap\{0,1\}^n\nonumber
\end{align*}
or equivalently, }
\begin{align}\label{eq:secon}
  \Theta(\pi)
  = \max_{x}\quad & \underset{\mathbf T_{\pi}\subseteq A(x)}{\mathbb{E}}[w(\mathbf T_{\pi})]\!:=\!\!\sum_{T\subseteq A(x)}\!\!\!\!w(T)\sum_{\substack{
S \in \mathcal{I}_l:\\
S \cap T = \varnothing,\ A(x)\setminus T \subseteq S
}}
\pi(S)\\
   \text{s.t.} \quad & x\in\mathcal P_f\cap\{0,1\}^n\nonumber
\end{align}
where $\mathbf{T}_{\pi}$ represents the random subset of $A(x)$ that 
are not interdicted by the leader with respect to the random strategy $\pi$ (i.e., the set of truly effective vertices),   
and the constraint ensures that  $x$ corresponds to an independent set of the follower’s matroid $\mathcal M_f$. 
{The restriction $S\cap T=\varnothing$ means that no vertex in $T$ is interdicted, and $A(x)\setminus T\subseteq S$ means that all vertices in $A(x)\setminus T$ are interdicted. }

\section{An Approximation Framework for Bilevel Optimization}\label{framework}

{We begin by describing a general framework for designing approximation algorithms for bilevel optimization problems.
The central idea is to replace the follower's value function by a surrogate objective that remains provably close to the original one while being easier to optimize at the leader level. This perspective is particularly useful when the follower's optimization problem is computationally difficult, but it is not restricted to such settings. The RMVCI problem studied in this paper provides a concrete illustration of the framework.}

Consider a bilevel min--max problem of the form
\begin{equation}\label{eq:gene}
\min_{y\in \mathcal Q_1}
\max_{x\in \mathcal Q_2(y)}
f(y,x),
\end{equation}
where $\mathcal Q_1$ and $\mathcal Q_2$ denote the feasible regions of the leader and follower, respectively.
When the follower's optimization problem
\[
\max_{x\in\mathcal Q_2(y)} f(y,x)
\]
can be solved efficiently, a variety of exact approaches become available, including dualization and value-function reformulations. In many applications, however, the follower's problem is itself NP-hard, rendering these classical approaches ineffective.

{We therefore turn to approximation algorithms. A feasible leader solution \(\hat y\in\mathcal Q_1\) is called an \(\alpha\)-approximate solution of \eqref{eq:gene}, for some \(\alpha\ge 1\), if
\[
\max_{x\in \mathcal Q_2(\hat y)}
f(\hat y,x)\le \alpha\cdot \mathrm{OPT},
\]
where $\mathrm{OPT}$ is the optimal value of \eqref{eq:gene}. 
An algorithm is said to have approximation ratio \(\alpha\), or to be an \(\alpha\)-approximation algorithm, if it returns an \(\alpha\)-approximate solution in polynomial time for every instance.}

The purpose of this section is to formalize a simple principle for obtaining approximation algorithms in such settings. Rather than attempting to solve the follower's problem exactly, we seek a surrogate value function that approximates the follower's optimal response while leading to a more tractable outer optimization problem.

Suppose that there exists a surrogate function
$\tilde f(y)$ satisfying
\begin{equation}\label{surrogate}
\frac1{c_2}\tilde f(y)
\le
\max_{x\in\mathcal Q_2(y)}f(y,x)
\le
c_1\tilde f(y)
\qquad
\forall y\in\mathcal Q_1,
\end{equation}
for some constants $c_1,c_2\ge 1$.
Intuitively, $\tilde f(y)$ provides a multiplicative approximation of the follower's optimal value function. We may therefore consider the surrogate optimization problem
$
\min_{y\in\mathcal Q_1}\tilde f(y),
$
which replaces the original bilevel problem by a potentially simpler optimization problem.

\begin{proposition}\label{lem:high}
Suppose that $\tilde f(y)$ is a surrogate function satisfying inequalities \eqref{surrogate}.
If the surrogate problem
\[
\min_{y\in\mathcal Q_1}\tilde f(y)
\]
admits a polynomial-time $c_3$-approximation algorithm ($c_3\ge 1$),
then the original bilevel problem~\eqref{eq:gene}
admits a polynomial-time
$c_1c_2c_3$-approximation algorithm.
\end{proposition}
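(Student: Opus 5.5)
The plan is to run the $c_3$-approximation algorithm for the surrogate problem $\min_{y\in\mathcal Q_1}\tilde f(y)$ and return its output $\hat y\in\mathcal Q_1$ unchanged as the leader's solution. Polynomiality is then inherited directly from that algorithm. All that remains is to chain the two inequalities in \eqref{surrogate} with the $c_3$ guarantee.

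Let $\Phi(y):=\max_{x\in\mathcal Q_2(y)}f(y,x)$, so that $\mathrm{OPT}=\min_{y\in\mathcal Q_1}\Phi(y)$. Let $y^*$ be an optimal leader solution of \eqref{eq:gene}; if the minimum is not attained, we take $y^*$ with $\Phi(y^*)\le \mathrm{OPT}+\varepsilon$ and let $\varepsilon\to 0$ at the end. Let $\tilde y^*$ be a minimizer of $\tilde f$ over $\mathcal Q_1$, handled the same way if necessary. The argument is the chain
\begin{align*}
\Phi(\hat y)\;\le\; c_1\tilde f(\hat y)\;\le\; c_1c_3\,\tilde f(\tilde y^*)\;\le\; c_1c_3\,\tilde f(y^*)\;\le\; c_1c_2c_3\,\Phi(y^*)\;=\;c_1c_2c_3\,\mathrm{OPT}.
\end{align*}
The steps are justified as follows:
\begin{itemize}
\item The first inequality is the right-hand side of \eqref{surrogate} at $y=\hat y$.
\item The second inequality is the $c_3$-approximation guarantee for the surrogate problem.
\item The third inequality holds because $\tilde y^*$ minimizes $\tilde f$ and $y^*\in\mathcal Q_1$.
\item The fourth inequality is the left-hand side of \eqref{surrogate} at $y=y^*$, rewritten as $\tilde f(y^*)\le c_2\Phi(y^*)$.
\end{itemize}

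There is no real obstacle; the only points needing care are bookkeeping. First, the left inequality of \eqref{surrogate} must be used in the rearranged form $\tilde f\le c_2\Phi$, and only at $y^*$, while the right inequality is used only at $\hat y$. Second, we must check that $\tilde f\ge 0$ (which follows from $\tilde f\le c_2\Phi$ together with nonnegative payoffs), so that multiplying by the constants $c_i\ge1$ preserves the direction of the inequalities.

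We also note that the argument never evaluates the follower's value $\Phi$. The approximation guarantee is therefore obtained even though computing $\Phi(\hat y)$ may be NP-hard, which is exactly the point of the framework.
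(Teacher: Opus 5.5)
Your proof is correct and is essentially the paper's argument: run the $c_3$-approximation on the surrogate and chain $\Phi(\hat y)\le c_1\tilde f(\hat y)\le c_1c_3\tilde f(\tilde y^*)\le c_1c_3\tilde f(y^*)\le c_1c_2c_3\Phi(y^*)$, and your $\varepsilon$-handling of non-attained minima is a harmless refinement. One small slip is in your side remark: nonnegativity of $\tilde f$ follows from the right inequality $\Phi\le c_1\tilde f$, not from $\tilde f\le c_2\Phi$, and the chain does not need it, since multiplying an inequality by a positive constant always preserves its direction; nonnegativity only matters for the $c_3$-approximation guarantee to be meaningful.
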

\begin{proof}
 Let $\tilde y$ and $y'$ denote the optimal and $c_3$-approximation solutions to the relaxed problem $\min_{y\in \mathcal{Q}_1}\tilde f(y)$, respectively. Then, by definition of approximation, $$\tilde f(y')\le c_3\cdot \tilde f(\tilde y).$$ 
 
 Let $y^*$ be an optimal solution to the original bilevel problem  $\min_{y\in \mathcal Q_1}\max_{x\in\mathcal Q_2(y)}f(y,x)$. Using the bounding inequalities, we have  
  $$\max_{x\in\mathcal Q_2(y')}f(y',x)\le c_1\cdot \tilde f(y')\le c_1c_3\cdot \tilde f(\tilde y)\le c_1c_3\cdot  \tilde f(y^*)\le c_1c_2c_3\cdot \max_{x\in\mathcal Q_2(y^*)}f(y^*,x).$$
  The third inequality follows from the optimality of  $\tilde y$ for the problem $\min_{y\in \mathcal{Q}_1}\tilde f(y)$. 
 Hence, $y'$ yields a $c_1c_2c_3$-approximation for the bilevel problem~\eqref{eq:gene}.
\end{proof}

Proposition~\ref{lem:high} separates the design of an approximation algorithm into two conceptually distinct tasks:
\begin{enumerate}
\item Construct a surrogate value function that approximates the follower's optimal response.

\item Develop an approximation algorithm for the resulting surrogate optimization problem.
\end{enumerate}
The overall approximation guarantee is obtained by combining the approximation losses incurred in these two steps.

\paragraph{Sources of surrogate functions.}
The framework is deliberately agnostic to how the surrogate function is constructed.
In some settings, a surrogate may arise from a mathematical relaxation of the follower's problem, such as a linear or semidefinite programming relaxation. In other settings, it may be induced by an approximation algorithm for the follower. More generally, the surrogate may be obtained by modifying the objective function, relaxing certain dependencies, aggregating variables, or exploiting structural properties of the underlying problem.

The framework only requires the existence of a function $\tilde f(y)$ satisfying suitable approximation bounds. How such a function is constructed is problem-dependent and constitutes the main algorithmic challenge.
A high-quality surrogate should satisfy two competing requirements.
On the one hand, it should closely approximate the follower's optimal value function.
On the other hand, it should preserve enough structure to make the resulting outer optimization problem computationally tractable.

These two goals are often in conflict.
A very accurate surrogate may lead to an optimization problem that is no easier than the original bilevel model, whereas an overly simplified surrogate may yield poor approximation guarantees.
The effectiveness of the framework therefore depends critically on balancing approximation quality and algorithmic tractability.

The RMVCI problem studied in this paper illustrates two distinct uses of the framework.
We first construct a surrogate based on the LP relaxation of the follower's optimization problem. This surrogate provides a tight approximation of the follower's value function, but the resulting bilevel optimization problem remains computationally difficult.

We therefore develop a second surrogate by structurally modifying the follower's objective and replacing joint interdiction probabilities with marginal probabilities. Although this surrogate yields a weaker approximation guarantee, it possesses a substantially more tractable optimization structure. The final $2$-approximation algorithm is obtained from this second surrogate.

\section{The follower's problem}\label{sec:inner}
This section studies the follower's optimization problem under a fixed randomized interdiction strategy of the leader.
Our analysis has two purposes.
First, we investigate the approximability of the follower's problem itself by deriving an integer programming formulation, analyzing its linear relaxation, and designing a rounding algorithm.
Second, the relaxation developed in this section provides a natural candidate surrogate objective for the bilevel approximation framework introduced in Section~\ref{framework}.

The latter role is important for the leader's problem studied in Section~\ref{sec:out}.
Indeed, the LP relaxation obtained below approximates the follower's true value function within a tight factor of $4/3$.
From the perspective of approximation quality, this makes it a highly attractive surrogate.
As will be discussed in Section~\ref{sec:out}, however, approximation quality alone does not guarantee that the resulting bilevel optimization problem is tractable.

\subsection{An integer programming formulation}

Fix a randomized interdiction strategy $\pi\in \Delta(\mathcal I_l)$ of the leader.
The follower observes the induced protection distribution and chooses an independent set of vertices in the matroid $\mathcal M_f$.
Let $x\in \mathcal P_f\cap\{0,1\}^n$ denote the follower's decision vector, and let $A(x)=\{v\in V:x_v=1\}$ be the corresponding set of attacked vertices.

Recall from~\eqref{eq:secon} that the follower's optimal value is
\begin{align*}
\Theta(\pi)&=\max_{x\in\mathcal P_f\cap\{0,1\}^n}\quad\underset{\mathbf T_{\pi}}{\mathbb{E}}[w(\mathbf T_{\pi})]\\
        &=\max_{x\in\mathcal P_f\cap\{0,1\}^n} \sum_{T\subseteq A(x)}\text{Pr}(\mathbf T_{\pi}=T)\sum_{e\in\delta(T)}w_e\\
        &=\max_{x\in\mathcal P_f\cap\{0,1\}^n}\sum_{e\in\delta(A(x))}w_e\cdot\text{Pr}(|\mathbf T_{\pi}\cap e|\neq 0)\\
        &=\max_{x\in\mathcal P_f\cap\{0,1\}^n} \big(\sum_{\substack{e=(u,v)\in E:\\u\in A(x), v\notin A(x)}}\!\!\!\!\!w_e\text{Pr}(u\!\in\! \mathbf T_{\pi})+\sum_{\substack{e=(u,v)\in E:\\u,v\in A(x)}}\!\!\!\!\!w_e\text{Pr}(u\!\in\! \mathbf T_{\pi})\vee v\!\in \!\mathbf T_{\pi})\big)\\
        &=\max_{x\in\mathcal P_f\cap\{0,1\}^n} \big(\sum_{\substack{e=(u,v)\in E:\\u\in A(x), v\notin A(x)}}\!\!\!\!\!w_e(1-\sum_{S
    :u\in S}\pi(S))+\sum_{\substack{e=(u,v)\in E:\\u,v\in A(x)}}\!\!\!\!\!w_e(1-\sum_{S:u,v\in S}\pi(S))\big).
\end{align*}
where $\mathbf T_\pi$ denotes the random subset of attacked vertices that are not interdicted by the leader.

To expose the combinatorial structure of this problem, we rewrite the follower's objective in terms of the marginal and pairwise interdiction probabilities induced by $\pi$.
For each vertex $v\in V$, define
\[
q_v(\pi):=\sum_{S:v\in S}\pi(S),
\]
the marginal probability that vertex $v$ is interdicted.
For each edge $e=(u,v)\in E$, define
\[
q_{uv}(\pi):=\sum_{S:u,v\in S}\pi(S),
\]
the probability that both endpoints of $e$ are simultaneously interdicted.
Since $\pi$ is fixed throughout this section, we often write $q_v$ and $q_{uv}$ when no confusion can arise.

For each edge $e=(u,v)\in E$, define
\[
w_e^u:=w_e(1-q_u),\qquad
w_e^v:=w_e(1-q_v),\qquad
w_e^{uv}:=w_e(1-q_{uv}).
\]
Here $w_e^u$ and $w_e^v$ represent the expected contributions of edge $e$ when it is successfully covered through endpoint $u$ or $v$, respectively, while $w_e^{uv}$ represents the expected contribution when both endpoints are selected by the follower.

With this notation, the follower's problem can be formulated as the following integer linear program:
\begin{align}
\max\quad
&\sum_{e=(u,v)\in E}
\left[
x_u w_e^u+x_v w_e^v
+z_e\bigl(w_e^{uv}-w_e^u-w_e^v\bigr)
\right]
\label{eq:ilp}\\
\text{s.t.}\quad
&x\in \mathcal P_f, \nonumber\\
&z_e\ge x_u+x_v-1, \ \ \ \ \forall e=(u,v)\in E, \nonumber\\
&z_e\le x_u,\quad z_e\le x_v, \ \ \ \  \forall e=(u,v)\in E, \nonumber\\
&x_v\in\{0,1\},\quad z_e\in\{0,1\}, \ \  \forall v\in V,\ \forall e\in E. \nonumber
\end{align}

\begin{lemma}\label{lem:ilp-equivalence}
The integer program~\eqref{eq:ilp} is equivalent to the follower's problem~\eqref{eq:secon}.
\end{lemma}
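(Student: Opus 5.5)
The plan is to show that the feasible solutions of \eqref{eq:ilp} correspond bijectively to follower strategies $x\in\mathcal P_f\cap\{0,1\}^n$, and that under this correspondence the objective of \eqref{eq:ilp} equals the expected payoff in \eqref{eq:secon}. The main input is the last line of the chain of equalities derived above for $\Theta(\pi)$, rewritten with $q_v$ and $q_{uv}$.

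\textbf{Step 1: the constraints force $z$.} For binary $x_u,x_v,z_e$, the constraints $z_e\ge x_u+x_v-1$, $z_e\le x_u$ and $z_e\le x_v$ force $z_e=x_ux_v$. If both endpoints are $1$, the lower bound gives $z_e\ge 1$. Otherwise one of the upper bounds gives $z_e\le 0$. Hence every binary $x\in\mathcal P_f$ has exactly one feasible completion $z$, namely $z_e=x_ux_v$. Conversely, projecting any feasible $(x,z)$ onto $x$ gives $x\in\mathcal P_f\cap\{0,1\}^n$. Since $\mathcal P_f$ is the convex hull of independent-set incidence vectors and its integer points are vertices, such an $x$ is the incidence vector of an independent set of $\mathcal M_f$.

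\textbf{Step 2: the objectives agree edge by edge.} Substitute $z_e=x_ux_v$ and evaluate the contribution of each edge $e=(u,v)$ in the four cases for $(x_u,x_v)$.
\begin{itemize}
\item If $(x_u,x_v)=(0,0)$, the contribution is $0$. This matches \eqref{eq:secon}, since $e\notin\delta(A(x))$.
\item If $(x_u,x_v)=(1,0)$, the contribution is $w_e^u=w_e(1-q_u)$. This is the weight of $e$ times $\Pr(u\in\mathbf T_\pi)$.
\item The case $(0,1)$ is symmetric and gives $w_e^v=w_e(1-q_v)$.
\item If $(x_u,x_v)=(1,1)$, the contribution is $w_e^u+w_e^v+(w_e^{uv}-w_e^u-w_e^v)=w_e(1-q_{uv})$. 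This is the weight of $e$ times the probability that not both endpoints are interdicted, i.e.\ that $u$ or $v$ lies in $\mathbf T_\pi$.
\end{itemize}
Summing over $E$ reproduces exactly the final expression in the derivation of $\Theta(\pi)$. That expression splits the edges of $\delta(A(x))$ into those with one endpoint in $A(x)$ and those with both endpoints in $A(x)$.

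\textbf{Step 3: conclude.} By Steps 1 and 2, the two problems have the same feasible set (up to the forced variable $z$) and the same objective value on it, so their optimal values and optimal $x$ coincide.

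The only delicate point is the correctness of that final expression, in particular the inclusion--exclusion for an edge with both endpoints attacked. There $\Pr(u\in\mathbf T_\pi\vee v\in\mathbf T_\pi)=1-\Pr(u,v\in S)=1-q_{uv}$, which is precisely why the correction term $z_e(w_e^{uv}-w_e^u-w_e^v)$ is needed. I would recheck this identity directly from the definition $\mathbb E[w(\mathbf T_\pi)]=\sum_S\pi(S)\,w(A(x)\setminus S)$ by linearity over edges, rather than relying on the intermediate formula.
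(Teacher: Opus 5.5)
Your proposal is correct and follows essentially the paper's proof. The paper likewise notes that for integral $x$ the constraints force $z_e=1$ exactly when $x_u=x_v=1$, then checks edge by edge that the correction term $z_e(w_e^{uv}-w_e^u-w_e^v)$ makes an edge with both endpoints attacked contribute $w_e^{uv}$ instead of $w_e^u+w_e^v$, and sums over $E$. Your additional check of that both-endpoints case directly from $\sum_S\pi(S)\,w(A(x)\setminus S)$, by linearity over edges, is a sound way to confirm the identity the paper takes from its earlier derivation.
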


\begin{proof}
In~\eqref{eq:ilp}, the variable $x_v$ indicates whether vertex $v$ is selected by the follower, while $z_e$ indicates whether both endpoints of edge $e$ are selected.
For an integral solution $x$, the constraints imply that $z_e=1$ if and only if $x_u=x_v=1$ for $e=(u,v)$.

Consider an edge $e=(u,v)$.
If only $u$ is selected, then the edge contributes $w_e^u$ in expectation.
If only $v$ is selected, it contributes $w_e^v$.
If both endpoints are selected, the expected contribution is $w_e^{uv}$, not $w_e^u+w_e^v$, because edge $e$ should be counted only once.
The correction term
\[
z_e\bigl(w_e^{uv}-w_e^u-w_e^v\bigr)
\]
therefore ensures that the total contribution of $e$ is exactly $w_e^{uv}$ when both endpoints are selected.
Summing over all edges gives precisely
\[
\mathbb E_{\mathbf T_\pi}\bigl[w(\mathbf T_\pi)\bigr].
\]
Thus~\eqref{eq:ilp} has the same feasible follower decisions and the same objective value as~\eqref{eq:secon}.
\end{proof}

The following elementary property will be used repeatedly.
It shows that the correction coefficient associated with $z_e$ is always non-positive.

\begin{lemma}\label{lem:negative-coeff}
For every edge $e=(u,v)\in E$,
\[
w_e^{uv}-w_e^u-w_e^v\le 0.
\]
\end{lemma}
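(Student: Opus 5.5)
We need $w_e(1-q_{uv}) \le w_e(1-q_u) + w_e(1-q_v)$. Since $w_e\ge 0$, dividing by $w_e$ (the case $w_e=0$ being trivial), this is equivalent to
\[
1-q_{uv}\le 2-q_u-q_v, \qquad\text{i.e.}\qquad q_u+q_v-q_{uv}\le 1.
\]

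The left-hand side is, by inclusion--exclusion, the probability that at least one of $u,v$ is interdicted. Explicitly,
\[
q_u+q_v-q_{uv}=\sum_{S\in\mathcal I_l:\,S\cap\{u,v\}\neq\varnothing}\pi(S).
\]
Each $S$ containing exactly one of $u,v$ is counted once. Each $S$ containing both is counted twice and subtracted once.

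This sum is at most $\sum_S\pi(S)=1$ because $\pi$ is a probability distribution, which gives the inequality. The plan is therefore:

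1. Factor out $w_e$, assuming nonnegative edge weights.
2. Expand $q_u$, $q_v$ and $q_{uv}$ as sums of $\pi(S)$ and regroup by the intersection $S\cap\{u,v\}$.
3. Use $\sum_S \pi(S)=1$ and $\pi\ge 0$.

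Nothing here is a real obstacle. The only point to state explicitly is the nonnegativity assumption $w_e\ge 0$, which the model implicitly uses.
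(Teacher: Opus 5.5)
Your proof is correct and matches the paper's argument: rewrite the expression as $w_e(q_u+q_v-q_{uv}-1)$, then bound $q_u+q_v-q_{uv}=\sum_{S:\,S\cap\{u,v\}\neq\varnothing}\pi(S)\le 1$ by inclusion--exclusion. Stating $w_e\ge 0$ explicitly is a small improvement, since the paper only uses it implicitly.
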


\begin{proof}
By definition,
\[
w_e^{uv}-w_e^u-w_e^v
=
w_e(1-q_{uv})-w_e(1-q_u)-w_e(1-q_v)
=
w_e(q_u+q_v-q_{uv}-1).
\]
Using inclusion--exclusion,
\[
q_u+q_v-q_{uv}
=
\sum_{S:u\in S\ \mathrm{or}\ v\in S}\pi(S)
\le 1.
\]
Hence $w_e^{uv}-w_e^u-w_e^v\le 0$.
\end{proof}

Because the coefficient of $z_e$ is non-positive, an optimal solution always chooses $z_e$ as small as possible.
Therefore the constraints $z_e\le x_u$ and $z_e\le x_v$ are redundant for the LP relaxation, and the relaxation can be written as
\begin{align}
\max\quad
&\sum_{e=(u,v)\in E}
\left[
x_u w_e^u+x_v w_e^v
+z_e\bigl(w_e^{uv}-w_e^u-w_e^v\bigr)
\right]
\label{eq:lplp}\\
\text{s.t.}\quad
&x\in \mathcal P_f, \nonumber\\
&z_e\ge x_u+x_v-1, \ \ \forall e=(u,v)\in E, \nonumber\\
&0\le z_e\le 1, \ \ \ \forall e\in E. \nonumber
\end{align}
Let $Z^{ILP}_{\mathrm{inner}}(\pi)$ and $Z^{LP}_{\mathrm{inner}}(\pi)$ denote the optimal values of~\eqref{eq:ilp} and~\eqref{eq:lplp}, respectively.
Clearly,
\[
Z^{ILP}_{\mathrm{inner}}(\pi)\le Z^{LP}_{\mathrm{inner}}(\pi).
\]

\subsection{The LP relaxation and a smoothing function}

The LP relaxation admits an equivalent formulation involving only the variables $x$.
Indeed, since the coefficient of $z_e$ is non-positive, an optimal solution of~\eqref{eq:lplp} satisfies
\[
z_e=\max\{0,x_u+x_v-1\}
\qquad \forall e=(u,v)\in E.
\]
Substituting these values into the objective yields
\begin{equation}\label{eq:lpx}
Z^{LP}_{\mathrm{inner}}(\pi)
=
\max_{x\in\mathcal P_f} L^\pi(x),
\end{equation}
where
\begin{equation}\label{eq:Lpi}
L^\pi(x)
:=
\sum_{e=(u,v)\in E}
\left[
x_u w_e^u+x_v w_e^v
+
\max\{0,x_u+x_v-1\}
\bigl(w_e^{uv}-w_e^u-w_e^v\bigr)
\right].
\end{equation}

The function $L^\pi(x)$ is piecewise linear and contains the non-differentiable terms
$\max\{0,x_u+x_v-1\}$.
To facilitate both the rounding procedure and the integrality-gap analysis, we introduce a smoother auxiliary function by replacing
\[
\max\{0,x_u+x_v-1\}
\]
with the product $x_ux_v$.
Define
\begin{equation}\label{eq:Fpi}
F^\pi(x)
:=
\sum_{e=(u,v)\in E}
\left[
x_u w_e^u+x_v w_e^v
+
x_ux_v
\bigl(w_e^{uv}-w_e^u-w_e^v\bigr)
\right].
\end{equation}
When $\pi$ is fixed, we write $L(x)$ and $F(x)$ for simplicity.

The next lemma establishes the key relationship between $L$ and $F$.
It is the central technical ingredient of this section.

\begin{lemma}\label{lem:LF}
For every $x\in \mathcal P_f$,
\[
F(x)\ge \frac34 L(x).
\]
Moreover, if $x\in\mathcal P_f\cap\{0,1\}^n$, then
\[
F(x)=L(x).
\]
\end{lemma}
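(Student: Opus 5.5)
The plan is to prove the inequality edge by edge. Both $F$ and $L$ are sums over $e=(u,v)\in E$ of terms that depend only on $(x_u,x_v)$. So it suffices to show, for each edge, that the $F$-term is at least $\frac34$ of the $L$-term. This only uses $x_u,x_v\in[0,1]$, which holds for every $x\in\mathcal P_f\subseteq[0,1]^n$.

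\emph{Integral case.} This is immediate. For $x_u,x_v\in\{0,1\}$ we have $\max\{0,x_u+x_v-1\}=x_ux_v$, so the two sums agree term by term and $F(x)=L(x)$.

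\emph{Setting up the per-edge inequality.} Fix an edge and write $a=w_e^u$, $b=w_e^v$, and $d=w_e^u+w_e^v-w_e^{uv}$. By Lemma~\ref{lem:negative-coeff}, $d\ge 0$. Since $q_{uv}\le\min\{q_u,q_v\}$ (both endpoints interdicted implies each is), we get $w_e^{uv}\ge w_e^u$ and $w_e^{uv}\ge w_e^v$. Equivalently, $b\ge d$ and $a\ge d$. With $s=x_u+x_v$, the inequality to prove is
\[
a x_u+b x_v-d\,x_ux_v\;\ge\;\tfrac34\bigl(a x_u+b x_v-d\max\{0,s-1\}\bigr).
\]

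\emph{Reduction to extreme rays.} For fixed $(x_u,x_v)$, both sides are linear in $(a,b,d)$. The feasible set $\{d\ge0,\ a\ge d,\ b\ge d\}$ is a polyhedral cone generated by the rays $(1,0,0)$, $(0,1,0)$ and $(1,1,1)$. It therefore suffices to check the inequality on these three rays.
\begin{itemize}
\item For the rays $(1,0,0)$ and $(0,1,0)$ it reads $x_u\ge\frac34x_u$ and $x_v\ge\frac34x_v$, which are trivial.
\item For the ray $(1,1,1)$ it becomes
\[
1-(1-x_u)(1-x_v)\;\ge\;\tfrac34\min\{s,1\}.
\]
\end{itemize}

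\emph{The key inequality.} This last inequality is the one real step, and I will split on $s$. By AM--GM, $x_ux_v\le s^2/4$ and $(1-x_u)(1-x_v)\le\bigl((2-s)/2\bigr)^2$.
\begin{itemize}
\item If $s\le1$: the left side is $s-x_ux_v\ge s-s^2/4$, and $s-s^2/4\ge\frac34 s$ holds exactly when $s\le1$.
\item If $s\ge1$: the left side is at least $1-(2-s)^2/4\ge1-\frac14=\frac34$.
\end{itemize}
Summing the per-edge inequalities over all edges gives $F(x)\ge\frac34L(x)$.

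The main obstacle is recognizing that $w_e^{uv}\ge\max\{w_e^u,w_e^v\}$, i.e.\ $d\le\min\{a,b\}$. Without this, the correction term could dominate and the ratio could fail. Once the cone structure is identified, the rest reduces to the classical $1-(1-x)(1-y)\ge\frac34\min\{1,x+y\}$ bound familiar from max-coverage pipage rounding.
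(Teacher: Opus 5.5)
Your proof is correct and is essentially the paper's argument: the same per-edge comparison, the same key facts $0\le d\le\min\{a,b\}$, the same split at $x_u+x_v=1$, and the same AM--GM bounds. Your extreme-ray reduction is a cleaner packaging of the paper's explicit case-2 identity, whose three nonnegative coefficients $w_e(1-q_u-q_v+q_{uv})$, $w_e(q_v-q_{uv})$ and $w_e(q_u-q_{uv})$ are exactly your $d$, $a-d$ and $b-d$.
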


\begin{proof}
Fix an edge $e=(u,v)\in E$.
It suffices to compare the contribution of $e$ to $F(x)$ and $L(x)$.
For notational simplicity, write
\[
a:=1-q_u,\qquad b:=1-q_v,\qquad c:=1-q_{uv}.
\]
Then
\[
w_e^u=w_ea,\qquad w_e^v=w_eb,\qquad w_e^{uv}=w_ec.
\]
By Lemma~\ref{lem:negative-coeff}, we have
\(
a+b-c\ge 0.
\)
We consider two cases.

First suppose that $x_u+x_v\le 1$.
Then the contribution of $e$ to $L(x)$ is
\[
w_e(x_ua+x_vb),
\]
while its contribution to $F(x)$ is
\[
w_e\left(x_ua+x_vb-x_ux_v(a+b-c)\right).
\]
Since
\[
a+b-c=1-q_u-q_v+q_{uv}\le \min\{1-q_u,1-q_v\}=\min\{a,b\},
\]
and
\[
x_ux_v\le \frac{(x_u+x_v)^2}{4}\le \frac{x_u+x_v}{4},
\]
we obtain
\[
x_ux_v(a+b-c)
\le
\frac14(x_ua+x_vb).
\]
Therefore the contribution of $e$ to $F(x)$ is at least $3/4$ of its contribution to $L(x)$.

Now suppose that $x_u+x_v\ge 1$.
The difference between the contribution of $e$ to $F(x)$ and $3/4$ times its contribution to $L(x)$ equals
\[
w_e\left[
\frac14x_ua+\frac14x_vb
+
\left(x_ux_v-\frac34(x_u+x_v-1)\right)(c-a-b)
\right].
\]
Equivalently, using $c-a-b=-(a+b-c)$, this expression can be written as
\[
w_e\left[
(a+b-c)\left(x_u+x_v-x_ux_v-\frac34\right)
+
\frac14 x_u(q_v-q_{uv})
+
\frac14 x_v(q_u-q_{uv})
\right].
\]
Each term is non-negative.
Indeed,
\[
a+b-c=1-q_u-q_v+q_{uv}\ge 0,
\]
\[
q_v-q_{uv}\ge 0,\qquad q_u-q_{uv}\ge 0,
\]
and
\[
x_u+x_v-x_ux_v\ge \frac34
\qquad \text{whenever } x_u+x_v\ge 1,\quad x_u,x_v\in[0,1].
\]
Thus the contribution of $e$ to $F(x)$ is again at least $3/4$ of its contribution to $L(x)$.

Summing over all edges proves $F(x)\ge \frac34 L(x)$ for every $x\in\mathcal P_f$.

Finally, if $x$ is integral, then for each edge $e=(u,v)$,
\[
x_ux_v=\max\{0,x_u+x_v-1\}.
\]
Hence $F(x)=L(x)$ for all integral $x$.
\end{proof}

Lemma~\ref{lem:LF} has two consequences.
First, $F$ agrees exactly with the follower's objective on integral solutions.
Second, over the entire matroid polytope, $F$ provides a uniform $3/4$ lower approximation of the LP objective $L$.
These two properties allow us to round an optimal fractional solution of the LP relaxation without losing more than a factor of $4/3$.

\subsection{Approximation algorithm and integrality gap}

We now use Lemma~\ref{lem:LF} to obtain an approximation algorithm for the follower's problem and, at the same time, an integrality-gap bound for the LP~\eqref{eq:lplp}.

\begin{theorem}\label{thm:gap}
For every fixed interdiction strategy $\pi$, one can compute in polynomial time an integral solution $\hat x\in \mathcal P_f\cap\{0,1\}^n$ such that
\[
Z^{ILP}_{\mathrm{inner}}(\pi)
\ge
L(\hat x)
=
F(\hat x)
\ge
\frac34 Z^{LP}_{\mathrm{inner}}(\pi).
\]
Consequently, the follower's problem admits a polynomial-time $4/3$-approximation algorithm.
\end{theorem}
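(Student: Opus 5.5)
The plan is to solve the LP relaxation and then round its optimal fractional solution by pipage rounding on the follower's matroid polytope. Lemma~\ref{lem:LF} controls the loss incurred in passing from $L$ to $F$; convexity of $F$ along pipage directions ensures that rounding loses nothing further.

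\textbf{Step 1: solve the LP.} We compute an optimal fractional solution $x^*\in\mathcal P_f$ of \eqref{eq:lplp}, so that $L(x^*)=Z^{LP}_{\mathrm{inner}}(\pi)$. The LP has polynomially many variables and edge constraints, plus the matroid polytope constraint. Cunningham's separation oracle for $\mathcal P_f$ lets the ellipsoid method solve it in polynomial time. The coefficients $w_e^u,w_e^v,w_e^{uv}$ are computed from $q_v(\pi)$ and $q_{uv}(\pi)$, which we assume are available from $\pi$, for instance when $\pi$ has polynomial support. Lemma~\ref{lem:LF} then gives $F(x^*)\ge \frac34 L(x^*)$.

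\textbf{Step 2: pipage rounding on $F$.} While $x$ is fractional, we find two fractional coordinates $u,v$ in the same tight set of the matroid polytope; this is the standard Ageev--Sviridenko and Calinescu et al.\ procedure for matroids. We then move along the direction $e_u-e_v$ to the two endpoints where the line leaves $\mathcal P_f$ or a new constraint becomes tight. The key check is that $F$ is convex along $e_u-e_v$. Setting $x_u=\alpha+t$ and $x_v=\beta-t$, every term of $F$ is linear in $t$ except the one for the edge $uv$, if present, which contributes $(\alpha+t)(\beta-t)\,(w_e^{uv}-w_e^u-w_e^v)$. The coefficient of $t^2$ in that contribution is $-(w_e^{uv}-w_e^u-w_e^v)$, which is nonnegative by Lemma~\ref{lem:negative-coeff}. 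All other product terms $x_ux_{v'}$ or $x_{u'}x_v$ are linear in $t$. Hence $F$ restricted to the line is convex, and one endpoint has value at least the current value. Each step reduces the number of fractional coordinates or increases the dimension of the tight face, so after polynomially many steps we reach an integral $\hat x\in\mathcal P_f\cap\{0,1\}^n$ with $F(\hat x)\ge F(x^*)$.

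\textbf{Step 3: conclude.} Lemma~\ref{lem:LF} gives $F(\hat x)=L(\hat x)$ for integral $\hat x$. By Lemma~\ref{lem:ilp-equivalence}, $L(\hat x)$ is the follower's true payoff, so $L(\hat x)\le Z^{ILP}_{\mathrm{inner}}(\pi)$. Chaining the inequalities gives
\[
Z^{ILP}_{\mathrm{inner}}\ge L(\hat x)=F(\hat x)\ge F(x^*)\ge \tfrac34 L(x^*)=\tfrac34 Z^{LP}_{\mathrm{inner}}\ge \tfrac34 Z^{ILP}_{\mathrm{inner}},
\]
which is the $4/3$-approximation.

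The main obstacle is implementing pipage rounding over a general matroid polytope in polynomial time, in particular finding the tight sets and computing step lengths. For this I would invoke the known procedure of Calinescu et al., which uses submodular minimization via the independence oracle, rather than reprove it. The convexity check in Step 2 is the only problem-specific ingredient.
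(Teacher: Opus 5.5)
Your proposal is correct and follows the paper's proof essentially step for step: solve the LP with the separation oracle for $\mathcal P_f$, then pipage-round using the convexity of $F$ along $e_i-e_j$, whose $t^2$ coefficient $w_e^u+w_e^v-w_e^{uv}$ is nonnegative by Lemma~\ref{lem:negative-coeff}, and finish with the chain from Lemma~\ref{lem:LF}.

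One small point: the paper first passes from the independence polytope $\mathcal P_f$ to a base polytope by adding zero-weight dummy elements. You skip this, and it matters because in $\mathcal P_f$ a fractional coordinate may lie in no tight set (e.g.\ for the free matroid), so your ``two fractional coordinates in a common tight set'' step can fail. The fix is easy either way: use the paper's reduction, or move along $\pm e_u$ in that case, which is safe because $F$ is linear in each single coordinate (indeed nondecreasing on $[0,1]^n$).
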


\begin{proof}
Let
\[
x^*\in \arg\max_{x\in\mathcal P_f} L(x)
\]
be an optimal solution of the LP relaxation~\eqref{eq:lpx}.
Then
\[
L(x^*)=Z^{LP}_{\mathrm{inner}}(\pi).
\]
Since $\mathcal P_f$ is a matroid polytope and we assume access to an independence oracle, $\mathcal P_f$ admits a polynomial-time separation oracle.
By the equivalence of separation and optimization, $x^*$ can be computed in polynomial time.

It remains to round $x^*$ to an integral point $\hat x\in\mathcal P_f$ without decreasing the value of $F$.
We use pipage rounding (see \cite{ageev2004pipage,calinescu2011maximizing}). {First, for any point $x\in\mathcal P_f$ and any pair of indices $i,j$, 
 we show that function $F$ is \emph{convex in the direction} $e_i-e_j$, where $e_i=(0,\ldots,0,1,0,\ldots,0)$ is the $i$‑th standard basis vector. That is, 
 the one-dimensional function 
 $$g_{ij}(t):=F(x+t(e_i-e_j)), ~t\in\mathbb R$$
 is  convex. Recall the expression of $F$ in \eqref{eq:Fpi}.
 Since the sum of finitely many convex functions is convex, it suffices to show that every term of $g_{ij}(t)$ is convex. For each term with respect to edge $e=(u,v)\in E$, if $\{u,v\}\cap\{i,j\}=\varnothing$, the edge contribution is constant. If $|\{u,v\}\cap\{i,j\}|=1$, it is linear with $t$. If $\{u,v\}=\{i,j\}$, there is a quadratic sub-term $(w_e^u+w_e^v-w_e^{uv})t^2$, which is convex because the coefficient  $(w_e^u + w_e^v - w_e^{uv})$ is non-negative by Lemma~\ref{lem:negative-coeff}. Hence $g_{ij}$ is convex. }

 The standard pipage-rounding argument  now applies.
{Since \(\mathcal P_f\) is a matroid independent-set polytope, we first reduce
to a matroid base polytope in the standard way by adding dummy elements of
zero contribution. Equivalently, we extend the matroid so that every
independent set can be completed to a base, and define \(F\) to be independent
of the dummy coordinates. The convexity property along directions
\(e_i-e_j\) is preserved, since dummy elements have zero weight and introduce
no new nonzero edge terms. }

Applying pipage rounding on this base polytope, if the current point \(x\)
is not a vertex, there exist two coordinates \(i,j\) and positive values
\(\alpha,\beta\) such that both
\[
x+\alpha(e_i-e_j)
\quad\text{and}\quad
x-\beta(e_i-e_j)
\]
remain feasible and lie on faces of strictly smaller dimension. Moreover,
such \(i,j,\alpha,\beta\) can be found efficiently. By convexity of
\(g_{ij}\), at least one of these two moves does not decrease \(F\).
We perform such a non-decreasing move and repeat.
The process terminates after polynomially many iterations at a vertex of the
extended base polytope. {Projecting this vertex back to the original elements
gives an integral point \(\hat x\in\mathcal P_f\). Since \(F\) does not depend
on the dummy coordinates,} and every step was non-decreasing, we have
\[
F(\hat x)\ge F(x^*).
\]

Using Lemma~\ref{lem:LF}, we obtain
\[
L(\hat x)
=
F(\hat x)
\ge
F(x^*)
\ge
\frac34 L(x^*)
=
\frac34 Z^{LP}_{\mathrm{inner}}(\pi).
\]
Since $\hat x$ is an integral feasible solution, its value is at most the optimum integral value, and hence
\[
Z^{ILP}_{\mathrm{inner}}(\pi)
\ge L(\hat x)
=
F(\hat x)
\ge
\frac34 Z^{LP}_{\mathrm{inner}}(\pi).
\]
This proves the theorem.
\end{proof}

Theorem~\ref{thm:gap} implies that the LP relaxation~\eqref{eq:lplp} has integrality gap at most $4/3$.
This bound is asymptotically tight.
Consider the special case in which the follower's matroid is a rank-$k$ uniform matroid with $k=n/2$, the graph is the complete graph $K_n$, all edge weights are equal to one, and the leader performs no interdiction.
In this setting, the follower's problem reduces to the classical maximum vertex-coverage problem on $K_n$ under a cardinality constraint.
Any integral solution selecting $k$ vertices covers
$k(n-1)-\binom{k}{2}$ edges.
On the other hand, the fractional solution $x_v=\frac12$ for all $v\in V$ is feasible for~\eqref{eq:lplp} and attains value $\binom{n}{2}$.
Therefore, with $k=n/2$,
\[
\frac{Z^{LP}_{\mathrm{inner}}(\pi)}
     {Z^{ILP}_{\mathrm{inner}}(\pi)}
\ge
\frac{\binom n2}{k(n-1)-\binom k2}
\longrightarrow
\frac43
\qquad\text{as } n\to\infty.
\]

\begin{corollary}\label{cor:gap}
The integrality gap of LP~\eqref{eq:lplp} is exactly $4/3$.
\end{corollary}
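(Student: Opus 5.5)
Corollary~\ref{cor:gap} splits into an upper bound and a matching lower bound on the supremum, over all instances, of $Z^{LP}_{\mathrm{inner}}(\pi)/Z^{ILP}_{\mathrm{inner}}(\pi)$. I will prove the two directions separately.

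\textbf{Upper bound.} This follows directly from Theorem~\ref{thm:gap}. For every instance (graph, weights, matroid $\mathcal M_f$, strategy $\pi$), the theorem gives
\[
Z^{ILP}_{\mathrm{inner}}(\pi)\ge \tfrac34 Z^{LP}_{\mathrm{inner}}(\pi),
\]
so the ratio is at most $4/3$. Instances with $Z^{ILP}_{\mathrm{inner}}(\pi)=0$ need a separate remark. In that case $Z^{LP}_{\mathrm{inner}}(\pi)\le \frac43\cdot 0=0$, so both values vanish. Such instances are excluded from the ratio by the usual convention, or treated as ratio $1$.

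\textbf{Lower bound.} I will use the family described just before the corollary. Take $K_n$ with $n$ even, unit weights, $\pi$ concentrated on $S=\varnothing$ (so all $q$'s are $0$), and $\mathcal M_f$ the uniform matroid of rank $k=n/2$. Three checks are needed.

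First, I verify that $x\equiv\frac12$ lies in $\mathcal P_f$, since $\sum_v x_v=n/2=k$. I then compute $L(x)$. With $a=b=c=1$ and $x_u+x_v=1$, each edge contributes $\frac12+\frac12+0=1$, so $L(x)=\binom n2$.

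Second, I compute $Z^{ILP}_{\mathrm{inner}}$. By Lemma~\ref{lem:ilp-equivalence}, it equals the maximum coverage attainable by at most $k$ vertices. Coverage is monotone and symmetric in $K_n$, so any $k$-set is optimal. Its coverage is all edges minus those inside the complement, namely $\binom n2-\binom{n-k}2$. With $k=n/2$ this equals $k(n-1)-\binom k2$.

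Third, I take the limit of the ratio. The denominator is $\frac{n(n-1)}2-\frac{(n/2)(n/2-1)}2\sim \frac{3n^2}{8}$, while the numerator is $\sim \frac{n^2}{2}$. Hence the ratio tends to $4/3$.

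Together the two directions show that the supremum of the gap over instances is exactly $4/3$. The main subtlety is interpretive rather than computational: for any single finite $n$ the ratio is strictly below $4/3$. I will therefore state explicitly that the integrality gap is understood as the supremum over instances, which is attained only in the limit.
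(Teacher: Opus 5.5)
Your proof is correct and follows the paper's route: the upper bound comes from Theorem~\ref{thm:gap}, and the lower bound comes from $K_n$ with unit weights, no interdiction, and the uniform matroid of rank $n/2$. The one error is your closing claim that the supremum is attained only in the limit. That claim is not needed, and it holds for the $K_n$ family but not in general. For example, take the $4$-cycle $a_1a_2b_1b_2$ with unit weights, no interdiction, and the partition matroid allowing at most one vertex from each of $\{a_1,b_1\}$ and $\{a_2,b_2\}$. The point $x\equiv\frac12$ is feasible with LP value $4$, while every independent set covers at most $3$ edges, so the ratio is exactly $4/3$.
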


Beyond its algorithmic implications for the follower's problem, the LP relaxation developed in this section plays an additional role in the bilevel setting.
For every interdiction strategy $\pi$, the value
$Z^{LP}_{\mathrm{inner}}(\pi)$
provides a tight $4/3$-approximation of the follower's true value function.
Consequently, within the approximation framework of Section~\ref{framework}, it constitutes a natural candidate surrogate objective for the leader's optimization problem.

\section{The leader's interdiction problem}\label{sec:out}

We now return to the leader's randomized interdiction problem~\eqref{eq:inter}.
The analysis in Section~\ref{sec:inner} provides a natural first candidate surrogate for the follower's value function.
For every fixed interdiction strategy $\pi$, we have shown that
\[
\frac34 Z^{LP}_{\mathrm{inner}}(\pi)
\le
Z^{ILP}_{\mathrm{inner}}(\pi)
\le
Z^{LP}_{\mathrm{inner}}(\pi).
\]
Thus, the LP relaxation gives a $4/3$-approximation of the follower's value function.
This suggests the LP-based surrogate problem
\begin{equation}\label{eq:minlx}
\min_{\pi\in\Delta(\mathcal I_l)}
Z^{LP}_{\mathrm{inner}}(\pi)
=
\min_{\pi\in\Delta(\mathcal I_l)}
\max_{x\in\mathcal P_f} L^\pi(x).
\end{equation}
By Proposition~\ref{lem:high}, any $\alpha$-approximate solution of~\eqref{eq:minlx} would yield a
$\frac{4\alpha}{3}$-approximation algorithm for the original interdiction problem.
Hence, from the viewpoint of approximation quality,~\eqref{eq:minlx} is a highly attractive surrogate model.

However, as discussed {in Section~\ref{sec:limit}}, this LP-based surrogate does not directly lead to an efficient algorithm for the leader's problem under general matroid constraints.
This motivates a second surrogate objective {in Section~\ref{subsec:mar}}, based only on marginal interdiction probabilities {on vertices}.
Although this second surrogate is less tight than the LP relaxation, it has a much more tractable optimization structure and ultimately yields a polynomial-time $2$-approximation algorithm.

\subsection{The LP-based surrogate and its limitation}\label{sec:limit}

A standard approach for bilevel optimization problems with tractable inner linear programs is the dualize-and-combine technique.
Since the inner maximization problem in~\eqref{eq:minlx} is the LP relaxation~\eqref{eq:lplp}, one may dualize the follower's LP and combine the resulting dual with the leader's outer minimization problem.

For a fixed interdiction strategy $\pi$, the dual of the follower's LP relaxation can be written using dual variables $\alpha_S$ for the matroid-rank inequalities of $\mathcal P_f$, and variables $\gamma_e,\beta_e$ associated with the edge constraints.
Combining this dual with the leader's decision variables gives the following single-level formulation:
\begin{align}
\min\quad
&\sum_{S\subseteq V} r_f(S)\alpha_S+\sum_{e\in E}(\gamma_e+\beta_e)
\label{eq:mindual}\\
\text{s.t.}\quad
&\sum_{S:v\in S}\alpha_S+\sum_{e:v\in e}\gamma_e
\ge w_e^v(\pi),
&&\forall v\in V, \nonumber\\
&\beta_e-\gamma_e
\ge w_e^{uv}(\pi)-w_e^u(\pi)-w_e^v(\pi),
&&\forall e=(u,v)\in E, \nonumber\\
&\sum_{S\in\mathcal I_l}\pi(S)=1, \nonumber\\
&\alpha_S\ge 0,\quad \gamma_e\ge 0,\quad \beta_e\ge 0,\quad \pi(S)\ge 0,
&&\forall S\subseteq V,\ \forall e\in E. \nonumber
\end{align}
Here, {$r_f(\cdot)$ is the rank function of matroid $\mathcal M_f$, and}
\[
w_e^u(\pi)=w_e(1-q_u(\pi)),\qquad
w_e^v(\pi)=w_e(1-q_v(\pi)),
~\text{~and~}~
w_e^{uv}(\pi)=w_e(1-q_{uv}(\pi)),
\]
where
\[
q_u(\pi)=\sum_{S:u\in S}\pi(S),
\qquad
q_{uv}(\pi)=\sum_{S:u,v\in S}\pi(S).
\]

The formulation~\eqref{eq:mindual} is an exact reformulation of the LP-based surrogate problem~\eqref{eq:minlx}.
Consequently, a polynomial-time exact algorithm for~\eqref{eq:mindual} would imply a polynomial-time $4/3$-approximation algorithm for the original interdiction problem.


{The main difficulty is that~\eqref{eq:mindual} does not admit an apparent polynomial-size representation for general matroid constraints. 
Indeed, an exact formulation involves a distribution over the feasible interdiction sets of the leader, while the follower-side dual contains variables indexed by the rank inequalities of the follower matroid polytope. 
More importantly, the constraints depend not only on the marginal interdiction probabilities \(q_v(\pi)\), but also on the pairwise probabilities \(q_{uv}(\pi)\). 
Thus one must optimize over the pairwise marginal polytope induced by the leader matroid, which is substantially more complicated.} 
Consequently, although the LP relaxation provides an excellent approximation of the follower's value for a fixed leader distribution $\pi$, the resulting bilevel surrogate problem remains difficult to solve in general.


This observation highlights an important point in applying the approximation framework of Section~\ref{framework}: a useful surrogate must be judged not only by how accurately it approximates the follower's value function, but also by whether it leads to a tractable leader-side optimization problem.
We therefore introduce a second surrogate objective, obtained by simplifying the dependence on pairwise interdiction probabilities.

\subsection{A structural surrogate based on marginal probabilities}\label{subsec:mar}

Recall that for an integral follower decision $x\in\mathcal P_f\cap\{0,1\}^n$,
\[
L^\pi(x)
=
\sum_{e=(u,v)\in E}
\left[
x_u w_e^u(\pi)
+
x_v w_e^v(\pi)
+
\max\{0,x_u+x_v-1\}
\bigl(
w_e^{uv}(\pi)-w_e^u(\pi)-w_e^v(\pi)
\bigr)
\right].
\]
The main obstacle is the term
\[
w_e^{uv}(\pi)=w_e(1-q_{uv}(\pi)),
\]
which depends on the probability that both endpoints of an edge are interdicted simultaneously.
This pairwise dependence is difficult to handle in the leader's optimization problem.

We therefore replace the pairwise quantity $1-q_{uv}(\pi)$ by the simpler marginal expression
\[
(1-q_u(\pi))+(1-q_v(\pi)).
\]
Equivalently, define
\[
\widetilde w_e^{uv}(\pi)
:=
w_e\bigl(2-q_u(\pi)-q_v(\pi)\bigr)
=
w_e^u(\pi)+w_e^v(\pi).
\]
Substituting $\widetilde w_e^{uv}(\pi)$ for $w_e^{uv}(\pi)$ in the objective $L^{\pi}(x)$ cancels the correction term and yields the surrogate function
\begin{equation}\label{eq:tildeL}
\widetilde L^{q(\pi)}(x)
:=
\sum_{e=(u,v)\in E}
\left[
x_u w_e(1-q_u(\pi))
+
x_v w_e(1-q_v(\pi))
\right].
\end{equation}
The crucial feature of $\widetilde L^{q(\pi)}(x)$ is that it depends on $\pi$ only through the marginal vector
\[
q(\pi)=(q_v(\pi))_{v\in V}.
\]
In particular, the pairwise probabilities $q_{uv}(\pi)$ no longer appear.

The following lemma shows that this structural simplification loses at most a factor of two.

\begin{lemma}\label{lem:surrogate-bound}
For every randomized strategy $\pi$ and every integral follower solution
$x\in\mathcal P_f\cap\{0,1\}^n$,
\[
\frac12 \widetilde L^{q(\pi)}(x)
\le
L^\pi(x)
\le
\widetilde L^{q(\pi)}(x).
\]
\end{lemma}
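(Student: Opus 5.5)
The plan is to compare the two functions edge by edge. Since both $L^\pi(x)$ and $\widetilde L^{q(\pi)}(x)$ are sums over $e=(u,v)\in E$, it suffices to show that, for each edge, its contribution to $L^\pi(x)$ lies between $\frac12$ and $1$ times its contribution to $\widetilde L^{q(\pi)}(x)$. Because $x$ is integral, each edge falls into one of three cases, according to whether zero, one or two of its endpoints are selected. As in the proof of Lemma~\ref{lem:LF}, write $a=1-q_u$, $b=1-q_v$, $c=1-q_{uv}$.

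\emph{No endpoint selected.} Both contributions are $0$, so the inequalities hold trivially.

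\emph{Exactly one endpoint selected, say $u$.} Here $\max\{0,x_u+x_v-1\}=0$. Both contributions then equal $w_e a$. The bounds hold because $w_e a\ge 0$, which uses $q_u\le 1$ and $w_e\ge 0$.

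\emph{Both endpoints selected.} The contribution to $L^\pi$ is $w_e^{uv}=w_e c$, while the contribution to $\widetilde L^{q(\pi)}$ is $w_e(a+b)$.
\begin{itemize}
\item For the upper bound $c\le a+b$, invoke Lemma~\ref{lem:negative-coeff}, which says precisely $w_e^{uv}-w_e^u-w_e^v\le 0$.
\item For the lower bound $c\ge \frac12(a+b)$, observe that $q_{uv}\le \min\{q_u,q_v\}$, since the event that both $u$ and $v$ are interdicted is contained in each single-vertex event. Hence $c=1-q_{uv}\ge \max\{a,b\}\ge \frac12(a+b)$.
\end{itemize}

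Summing the per-edge inequalities over all edges gives both claimed bounds, using nonnegativity of the weights $w_e$. The only delicate step is the lower bound in the both-selected case, where one must notice that pairwise interdiction probability is dominated by each marginal. The rest is bookkeeping, and I expect no real obstacle.

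Tightness of the factor $\frac12$ can be illustrated with $q_u=q_v=q_{uv}=0$, where $c=1$ and $a+b=2$. This is not needed for the lemma itself.
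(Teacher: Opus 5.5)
Your proposal is correct and follows essentially the same route as the paper: an edge-by-edge comparison in which edges with at most one selected endpoint contribute identically to both functions. For edges with both endpoints selected, the upper bound comes from Lemma~\ref{lem:negative-coeff} and the lower bound from $q_{uv}\le\min\{q_u,q_v\}$. You also note that the one-endpoint case needs $w_e(1-q_u)\ge 0$, a nonnegativity the paper uses only implicitly.
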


\begin{proof}
Fix an edge $e=(u,v)\in E$.
We compare the contribution of $e$ to $L^\pi(x)$ and to $\widetilde L^{q(\pi)}(x)$.
If $x_u+x_v\le 1$, then at most one endpoint of $e$ is selected.
In this case, the two objectives have the same edge contribution
\(
x_u w_e^u(\pi)+x_v w_e^v(\pi),
\)
and the desired inequalities hold immediately.

Now suppose $x_u=x_v=1$.
The contribution of $e$ to $L^\pi(x)$ is
\[
w_e^{uv}(\pi)=w_e(1-q_{uv}(\pi)),
\]
whereas the contribution of $e$ to $\widetilde L^{q(\pi)}(x)$ is
\[
w_e^u(\pi)+w_e^v(\pi)
=
w_e(2-q_u(\pi)-q_v(\pi)).
\]
The upper bound follows from Lemma~\ref{lem:negative-coeff}, which implies
\[
w_e^{uv}(\pi)\le w_e^u(\pi)+w_e^v(\pi).
\]
For the lower bound, it suffices to show
\[
\frac{2-q_u(\pi)-q_v(\pi)}{2}
\le
1-q_{uv}(\pi).
\]
Since
\[
q_{uv}(\pi)\le \min\{q_u(\pi),q_v(\pi)\}
\le
\frac{q_u(\pi)+q_v(\pi)}{2},
\]
the inequality follows.
Summing over all edges proves the lemma.
\end{proof}

Lemma~\ref{lem:surrogate-bound} shows that
\[
\max_{x\in\mathcal P_f\cap\{0,1\}^n}
\widetilde L^{q(\pi)}(x)
\]
is a valid surrogate for the follower's true value function with constants $c_1=1$ and $c_2=2$ in Proposition~\ref{lem:high}.
We therefore consider the surrogate interdiction problem
\begin{equation}\label{eq:surrogate-pi}
\min_{\pi\in\Delta(\mathcal I_l)}
\max_{x\in\mathcal P_f\cap\{0,1\}^n}
\widetilde L^{q(\pi)}(x).
\end{equation}

\subsection{Marginal reformulation}

The advantage of~\eqref{eq:surrogate-pi} is that its objective depends on the leader's mixed strategy only through the marginal vector $q(\pi)$.
Every distribution $\pi\in\Delta(\mathcal I_l)$ induces a vector $q(\pi)\in\mathcal P_l$.
Conversely, every point $q\in\mathcal P_l$ can be represented as a convex combination of incidence vectors of independent sets of the leader's matroid.
Thus, the distributional problem~\eqref{eq:surrogate-pi} can be reformulated over the leader's matroid polytope.

\begin{lemma}\label{lem:marginal-equivalence}
The surrogate problem~\eqref{eq:surrogate-pi} has the same optimal value as
\begin{equation}\label{eq:surrogate-q}
\min_{q\in\mathcal P_l}
\max_{x\in\mathcal P_f\cap\{0,1\}^n}
\widetilde L^q(x),
\end{equation}
where
\begin{equation}\label{eq:tildeLq}
\widetilde L^q(x)
=
\sum_{e=(u,v)\in E}
\left[
x_u w_e(1-q_u)
+
x_v w_e(1-q_v)
\right].
\end{equation}
Moreover, an optimal marginal vector $q$ can be converted in polynomial time into a distribution over at most $n+1$ independent sets of $\mathcal M_l$.
\end{lemma}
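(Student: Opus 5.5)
The plan is to show that the map $\pi\mapsto q(\pi)$ sends $\Delta(\mathcal I_l)$ \emph{onto} $\mathcal P_l$. Since the objective of \eqref{eq:surrogate-pi} depends on $\pi$ only through $q(\pi)$, the two optimal values then coincide. We argue the two inclusions separately.

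First, for any $\pi\in\Delta(\mathcal I_l)$ we have
\[
q(\pi)=\sum_{S\in\mathcal I_l}\pi(S)\,\mathbf 1_S .
\]
This is a convex combination of incidence vectors of independent sets, so $q(\pi)\in\mathcal P_l$ by the definition of the matroid polytope as a convex hull. Moreover $\widetilde L^{q(\pi)}(x)=\widetilde L^{q}(x)$ with $q=q(\pi)$, by comparing \eqref{eq:tildeL} with \eqref{eq:tildeLq}. Hence the value of \eqref{eq:surrogate-q} is at most that of \eqref{eq:surrogate-pi}.

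Conversely, take any $q\in\mathcal P_l$. By Edmonds' characterization $\mathcal P_l=\mathrm{conv}\{\mathbf 1_S:S\in\mathcal I_l\}$, so there exist weights $\lambda_S\ge0$ with $\sum_S\lambda_S=1$ and $\sum_S\lambda_S\mathbf 1_S=q$. Setting $\pi(S):=\lambda_S$ gives $\pi\in\Delta(\mathcal I_l)$ with $q(\pi)=q$ and the same inner maximum. The reverse inequality follows, so the values are equal and optimal solutions correspond to each other.

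For the algorithmic part, Carathéodory's theorem in $\mathbb R^n$ guarantees a representation using at most $n+1$ vertices. The main obstacle is producing such a decomposition in polynomial time given only an independence oracle. I would invoke the constructive version of Carathéodory for polytopes with a polynomial-time separation (equivalently, optimization) oracle, in the style of Gr\"otschel--Lov\'asz--Schrijver \cite{grotschel1988geometric}; the separation oracle is available by Cunningham \cite{cunningham1984testing}.

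Alternatively, a direct matroid argument works by peeling off vertices. Given the current point $q$, compute a vertex $\mathbf 1_S$ of the minimal face of $\mathcal P_l$ containing $q$ by greedy optimization, using tight-set structure computed via submodular minimization. Then move from $\mathbf 1_S$ through $q$ to the boundary of that face. This strictly lowers the face dimension, so after at most $n$ steps the process terminates with at most $n+1$ independent sets and explicit coefficients.

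Finally, note that the lemma concerns optimal values and conversion only. Actually solving \eqref{eq:surrogate-q} is not required here.
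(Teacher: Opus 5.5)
Your proposal is correct and follows the paper's proof essentially step by step. Both show that $\pi\mapsto q(\pi)$ maps $\Delta(\mathcal I_l)$ onto $\mathcal P_l$, so the two optimal values coincide, and both then obtain a decomposition into at most $n+1$ independent sets via the Gr\"otschel--Lov\'asz--Schrijver constructive Carath\'eodory algorithm, using the polynomial-time separation oracle for the matroid polytope. Your face-peeling alternative is a valid extra route, but the lemma does not need it.
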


\begin{proof}
Any distribution $\pi\in\Delta(\mathcal I_l)$ induces a marginal vector $q(\pi)\in\mathcal P_l$.
Since $\widetilde L^{q(\pi)}(x)$ depends only on these marginals, the value of~\eqref{eq:surrogate-q} is no larger than that of~\eqref{eq:surrogate-pi}.

Conversely, take any $q\in\mathcal P_l$.
By definition, $\mathcal P_l$ is the convex hull of incidence vectors of independent sets of $\mathcal M_l$.
Therefore, $q$ admits a representation
\[
q=\sum_{S\in\mathcal I_l}\pi(S)\chi^S
\]
for some distribution $\pi\in\Delta(\mathcal I_l)$, {where $\chi^S$ is the incidence vector of $S$.}
For such a distribution, the induced marginals are exactly $q$, and hence the objective value in~\eqref{eq:surrogate-pi} equals that in~\eqref{eq:surrogate-q}.


{Moreover, given \(q\), such a distribution \(\pi\) can be recovered in polynomial time
using the convex decomposition algorithm of Gr{\"o}tschel, Lov{\'a}sz, and
Schrijver (see Theorem~6.5.11 in~\cite{grotschel1988geometric}). Since
\(\mathcal P_l\subseteq\mathbb R^n\) is a matroid polytope and admits a polynomial-time separation oracle, this algorithm returns
vertices \( \chi^{S_1},\ldots,\chi^{S_k}\) of \(\mathcal P_l\) and coefficients
\(\lambda_1,\ldots,\lambda_k\ge 0\), with \(k\le n+1\) and
\(\sum_{i=1}^k\lambda_i=1\), such that
\(
q=\sum_{i=1}^k \lambda_i \chi^{S_i}.
\)
Because the vertices of a matroid  polytope are precisely the
incidence vectors of independent sets, this gives a distribution supported on
\(S_1,\ldots,S_k\).}
\end{proof}

We next show that~\eqref{eq:surrogate-q} can be solved in polynomial time.

\begin{lemma}\label{lem:solve-surrogate}
The marginal surrogate problem~\eqref{eq:surrogate-q} can be solved in polynomial time.
\end{lemma}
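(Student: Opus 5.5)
The plan is to observe that $\widetilde L^q(x)$ is linear in $x$ for fixed $q$, and affine in $q$ for fixed $x$. This should turn the min--max in \eqref{eq:surrogate-q} into a single convex program over $\mathcal P_l$ that we can solve with the ellipsoid method and separation oracles.

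\textbf{Step 1: rewrite the objective as a linear form in $x$.} For each vertex $v$, let $d_v:=\sum_{e\in E:\,v\in e} w_e$ be its weighted degree. Regrouping \eqref{eq:tildeLq} by vertices gives
\[
\widetilde L^q(x)=\sum_{v\in V} x_v\, d_v(1-q_v).
\]
For fixed $q\in\mathcal P_l\subseteq[0,1]^n$ this is a linear function of $x$ with nonnegative coefficients $c_v(q):=d_v(1-q_v)$. The maximum of a linear function over $\mathcal P_f\cap\{0,1\}^n$ equals its maximum over the polytope $\mathcal P_f$, because the vertices of a matroid polytope are integral. It is computed by the matroid greedy algorithm using the independence oracle. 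Define
\[
\Phi(q):=\max_{x\in\mathcal P_f} \sum_{v} x_v d_v(1-q_v).
\]

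\textbf{Step 2: convexity and the reformulation.} As a pointwise maximum of functions affine in $q$, $\Phi$ is convex on $\mathcal P_l$. Problem \eqref{eq:surrogate-q} therefore equals $\min_{q\in\mathcal P_l}\Phi(q)$, which we write in epigraph form as $\min\{t:\ q\in\mathcal P_l,\ t\ge \sum_v x_v d_v(1-q_v)\ \forall x\in\mathcal P_f\cap\{0,1\}^n\}$.

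\textbf{Step 3: separation and ellipsoid.} Given a candidate $(q,t)$, we first check $q\in\mathcal P_l$ with Cunningham's separation oracle. If $q\in\mathcal P_l$, we run greedy for the weights $c_v(q)$ to obtain a maximizing independent set $I$. If $t<\sum_{v\in I}d_v(1-q_v)$, then $t\ge\sum_{v\in I}d_v(1-q_v)$ is a violated linear inequality, and it serves as the separating hyperplane. By the equivalence of separation and optimization in \cite{grotschel1988geometric}, the problem is then solvable in polynomial time. The encoding lengths of all constraints are polynomial in the input, since they are $0/1$ combinations of the $d_v$ together with matroid rank inequalities.

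As a fully explicit alternative, we can dualize the inner LP $\max\{c(q)^\top x: x\in\mathcal P_f\}$. This gives $\min\{\sum_S r_f(S)\alpha_S:\ \sum_{S\ni v}\alpha_S\ge d_v(1-q_v),\ \alpha\ge0\}$, and combining it with $q\in\mathcal P_l$ yields a single LP that is jointly linear in $(q,\alpha)$. It has exponentially many variables but polynomial-time solvable dual separation, so either route works.

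\textbf{Main obstacle.} The main point needing care is Step 3, making the ellipsoid argument rigorous. We need the feasible region to be a well-described polyhedron with polynomially bounded facet complexity. We also need an exact optimum rather than an $\varepsilon$-approximate one, which is what Theorem~6.4.9 of \cite{grotschel1988geometric} delivers for well-described polyhedra. Convexity itself is immediate once the rewriting in Step 1 is in place.
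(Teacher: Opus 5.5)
Your Steps 1--3 are correct and are essentially the paper's proof: the same epigraph LP in $(q,t)$, separated by first calling the leader-polytope oracle and then running greedy on the follower matroid with weights $d_v(1-q_v)$, and finished by the equivalence of separation and optimization. Your facet-complexity remark adds rigor the paper leaves implicit. The one loose point is the ``fully explicit alternative'': the combined LP in $(q,\alpha)$ also carries the exponentially many leader rank constraints, so its LP dual has exponentially many variables as well, and the claimed dual separation is not immediate---projecting out $\alpha$ just returns you to the epigraph formulation---so rely on Step~3 rather than on that remark.
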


\begin{proof}
For a fixed $q$, the function $\widetilde L^q(x)$ is linear in $x$ with non-negative coefficients.
Thus, the inner maximization
$\max_{x\in\mathcal P_f\cap\{0,1\}^n}\widetilde L^q(x)$ is simply the problem
of finding a maximum-weight independent set in the follower matroid, and can be
solved by the greedy algorithm.

Now consider the epigraph formulation
\begin{align}
\min\quad & t \label{eq:epi}\\
\text{s.t.}\quad
&q\in\mathcal P_l, \nonumber\\
&t\ge \widetilde L^q(x),
\quad\forall x\in\mathcal P_f\cap\{0,1\}^n. \nonumber
\end{align}
This is a linear program in the variables $(q,t)$ with exponentially many constraints.
 {We separate over~\eqref{eq:epi} as follows.}
Given any candidate point $(q,t)$, we first test whether $q\in\mathcal P_l$ using the separation oracle for the leader matroid polytope.
If $q\notin\mathcal P_l$, the oracle returns a separating hyperplane.
Otherwise, we compute
\[
\max_{x\in\mathcal P_f\cap\{0,1\}^n}\widetilde L^q(x)
\]
by the greedy algorithm over the follower matroid.
If the optimal value is larger than $t$, the corresponding integral maximizer $x$ gives a violated epigraph constraint of~\eqref{eq:epi}. If the optimum value is at most $t$, then no epigraph constraint is violated.

Thus~\eqref{eq:epi} admits a polynomial-time separation oracle. By the
equivalence of separation and optimization, the epigraph formulation, and hence
the marginal surrogate problem~\eqref{eq:surrogate-q}, can be solved in
polynomial time.
\end{proof}

\subsection{Applying the framework}

We now combine the surrogate bounds with the marginal reformulation to obtain a $2$-approximation algorithm for the original leader's interdiction problem.

\begin{theorem}\label{thm:leader}
The RMVCI problem under matroid constraints admits a polynomial-time
$2$-approximation algorithm.
Moreover, the returned randomized interdiction strategy can be chosen
to have support size at most $n+1$.
\end{theorem}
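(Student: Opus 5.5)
The plan is to instantiate Proposition~\ref{lem:high} with the marginal surrogate
\[
\tilde f(\pi):=\max_{x\in\mathcal P_f\cap\{0,1\}^n}\widetilde L^{q(\pi)}(x),
\]
taking $c_1=1$, $c_2=2$ and $c_3=1$.

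\textbf{Step 1: the surrogate bounds.} By Lemma~\ref{lem:ilp-equivalence} and Lemma~\ref{lem:LF}, the follower's true value is $\Theta(\pi)=Z^{ILP}_{\mathrm{inner}}(\pi)=\max_{x\in\mathcal P_f\cap\{0,1\}^n}L^\pi(x)$, since $L=F$ is the exact objective on integral points. Lemma~\ref{lem:surrogate-bound} holds pointwise for every integral $x$, so taking maxima over the same feasible set on both sides gives
\[
\tfrac12\tilde f(\pi)\le\Theta(\pi)\le\tilde f(\pi)\quad\text{for every }\pi\in\Delta(\mathcal I_l).
\]
In the notation of \eqref{surrogate}, this is exactly $c_1=1$ and $c_2=2$. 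Taking maxima on both sides is legitimate because each inequality holds for every feasible $x$, so it holds for the respective maximizers.

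\textbf{Step 2: solving the surrogate exactly.} Lemma~\ref{lem:solve-surrogate} solves \eqref{eq:surrogate-q} exactly in polynomial time and returns an optimal $q^*\in\mathcal P_l$. Lemma~\ref{lem:marginal-equivalence} shows that \eqref{eq:surrogate-q} and \eqref{eq:surrogate-pi} have the same optimal value. It also converts $q^*$ in polynomial time into a distribution $\pi^*$ supported on at most $n+1$ independent sets of $\mathcal M_l$ with $q(\pi^*)=q^*$. Since $\tilde f$ depends on $\pi$ only through $q(\pi)$, we get $\tilde f(\pi^*)=\min_{\pi}\tilde f(\pi)$, so $\pi^*$ is an exact minimizer of the surrogate and $c_3=1$.

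\textbf{Step 3: combining.} Proposition~\ref{lem:high} now gives
\[
\Theta(\pi^*)\le 1\cdot 2\cdot 1\cdot\mathrm{OPT}.
\]
Explicitly, with $\pi^{\mathrm{opt}}$ an optimal leader strategy,
\[
\Theta(\pi^*)\le\tilde f(\pi^*)\le\tilde f(\pi^{\mathrm{opt}})\le 2\,\Theta(\pi^{\mathrm{opt}}).
\]
The support bound $n+1$ comes directly from the decomposition in Lemma~\ref{lem:marginal-equivalence}. The running time is polynomial because each component (the ellipsoid-based LP solve, the greedy separation over the follower matroid, and the convex decomposition) is polynomial given the independence oracles.

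\textbf{Main obstacle.} The only delicate point is precision in Step 2. The ellipsoid method yields an exact optimal vertex only if the encoding sizes of the facets of the epigraph LP are polynomially bounded. Matroid rank inequalities have $0/1$ coefficients and integer right-hand sides bounded by $n$, and the constraints $t\ge\widetilde L^q(x)$ have coefficients built from the input weights. The standard rational-polyhedra argument of Gr\"otschel--Lov\'asz--Schrijver should therefore return an exact optimal rational $q^*$. I would invoke this directly rather than re-derive it. If needed, a small additive error could be absorbed, but it should not be.
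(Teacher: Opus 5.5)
Your proposal is correct and follows the paper's proof: Lemma~\ref{lem:surrogate-bound} gives $c_1=1$ and $c_2=2$, Lemmas~\ref{lem:marginal-equivalence} and~\ref{lem:solve-surrogate} give an exact polynomial-time surrogate solver ($c_3=1$) with support at most $n+1$, and Proposition~\ref{lem:high} combines these. You also make explicit the identification $\Theta(\pi)=\max_{x\in\mathcal P_f\cap\{0,1\}^n}L^\pi(x)$ via Lemma~\ref{lem:ilp-equivalence} and the need for exact rational optimization via Gr\"otschel--Lov\'asz--Schrijver; the paper leaves both implicit, and both of your remarks are sound.
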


\begin{proof}
By Lemma~\ref{lem:surrogate-bound},
the surrogate objective satisfies
\[
\frac12 
\max_{x\in\mathcal P_f\cap\{0,1\}^n}\widetilde L^{q(\pi)}(x)
\le
\max_{x\in\mathcal P_f\cap\{0,1\}^n}L^\pi(x)
\le
\max_{x\in\mathcal P_f\cap\{0,1\}^n}\widetilde L^{q(\pi)}(x).
\]
for every interdiction strategy $\pi$.
Hence the framework parameters are $c_1=1, c_2=2$.

By Lemma~\ref{lem:marginal-equivalence}
and Lemma~\ref{lem:solve-surrogate},
the surrogate  problem $\min_{\pi\in\Delta(\mathcal I_l)}
\max_{x\in\mathcal P_f\cap\{0,1\}^n}\widetilde L^{q(\pi)}(x)$ can be solved exactly in polynomial time.
Therefore $c_3=1$.

Applying Proposition~\ref{lem:high}
yields a polynomial-time
$c_1c_2c_3=2$
approximation algorithm.
The support-size bound follows from
Lemma~\ref{lem:marginal-equivalence}.
\end{proof}

\section{Discussion and Future Directions}\label{sec:con}

This paper studies a randomized network interdiction problem in which the follower's optimization problem is NP-hard. Unlike classical interdiction models, where tractable follower problems permit exact reformulations through dualization or value-function techniques, the RMVCI problem requires approximation methods at both levels of the bilevel program.

Our main contribution is not only a constant-factor approximation algorithm for RMVCI, but also a general approximation methodology for handling bilevel optimization problems. 
The key idea is to replace the follower's value function by a surrogate objective that simultaneously preserves approximation guarantees and exposes additional structure in the leader's optimization problem. In the RMVCI setting, this leads to a reformulation based on marginal interdiction probabilities and matroid polytopes, which ultimately enables polynomial-time optimization.

Several directions deserve further investigation. First, the approximation framework developed here is not restricted to vertex-cover objectives and may be applicable to broader classes of interdiction problems involving NP-hard packing, covering, or domination problems. Second, it would be interesting to identify conditions under which surrogate objectives can be constructed systematically rather than on a problem-specific basis. Finally, closing the gap between the current 2-approximation guarantee and the best achievable approximation ratio for RMVCI remains an important open question.

More broadly, we believe that approximation-oriented reformulations of follower value functions may provide a useful paradigm for bilevel optimization beyond network interdiction, particularly in settings where exact lower-level optimization is computationally prohibitive.

\paragraph{Funding Declaration.}
Chenhao Wang is supported by BNBU under grant UICR0400004-24B. The content is solely the responsibility of the authors and does not necessarily represent the official views of the funding agencies.

\paragraph{Data Availability Statement} This manuscript has no associated data.

\bibliographystyle{plain}
\bibliography{reference}

@book{grotschel1988geometric,
  title={Geometric algorithms and combinatorial optimization},
  author={Gr{\"o}tschel, Martin and Lov{\'a}sz, L{\'a}szl{\'o} and Schrijver, Alexander},
  year={1988},
volume={2},
  publisher={Springer}
}

@article{lozano2017value,
  title={A value-function-based exact approach for the bilevel mixed-integer programming problem},
  author={Lozano, Leonardo and Smith, J Cole},
  journal={Operations Research},
  volume={65},
  number={3},
  pages={768--786},
  year={2017},
  publisher={INFORMS}
}

@article{DBLP:journals/tcs/ChenZ13,
  author       = {Lin Chen and
                  Guochuan Zhang},
  title        = {Approximation algorithms for a bi-level knapsack problem},
  journal      = {Theoretical Computer Science},
  volume       = {497},
  pages        = {1--12},
  year         = {2013}
}

@inproceedings{chen2022approximation,
  title={Approximation Algorithms for Interdiction Problem with Packing Constraints},
  author={Chen, Lin and Wu, Xiaoyu and Zhang, Guochuan},
  booktitle={49th International Colloquium on Automata, Languages, and Programming (ICALP)},
  volume={229},
  pages={39},
  year={2022}
}

@article{calinescu2011maximizing,
  title={Maximizing a monotone submodular function subject to a matroid constraint},
  author={Calinescu, Gruia and Chekuri, Chandra and Pal, Martin and Vondr{\'a}k, Jan},
  journal={SIAM Journal on Computing},
  volume={40},
  number={6},
  pages={1740--1766},
  year={2011},
  publisher={SIAM}
}

@article{DBLP:journals/orl/CarvalhoLM18,
  author       = {Margarida Carvalho and
                  Andrea Lodi and
                  Patrice Marcotte},
  title        = {A polynomial algorithm for a continuous bilevel knapsack problem},
  journal      = {Operations Research Letters},
  volume       = {46},
  number       = {2},
  pages        = {185--188},
  year         = {2018}
}

@article{DBLP:journals/mp/CroceS20,
  author       = {Federico Della Croce and
                  Rosario Scatamacchia},
  title        = {An exact approach for the bilevel knapsack problem with interdiction
                  constraints and extensions},
  journal      = {Mathematical Programming},
  volume       = {183},
  number       = {1},
  pages        = {249--281},
  year         = {2020}
}

@article{frederickson1999increasing,
  title={Increasing the weight of minimum spanning trees},
  author={Frederickson, Greg N and Solis-Oba, Roberto},
  journal={Journal of Algorithms},
  volume={33},
  number={2},
  pages={244--266},
  year={1999},
  publisher={Elsevier}
}

@article{bertsimas2016power,
  title={On the power of randomization in network interdiction},
  author={Bertsimas, Dimitris and Nasrabadi, Ebrahim and Orlin, James B},
  journal={Operations Research Letters},
  volume={44},
  number={1},
  pages={114--120},
  year={2016},
  publisher={Elsevier}
}

@article{eor/TayyebiMS23,
  author       = {Javad Tayyebi and
                  Ankan Mitra and
                  Jorge A. Sefair},
  title        = {The continuous maximum capacity path interdiction problem},
  journal      = {European Journal of Operations Research},
  volume       = {305},
  number       = {1},
  pages        = {38--52},
  year         = {2023}
}

@article{weninger2025interdiction,
  title={Interdiction of minimum spanning trees and other matroid bases},
  author={Weninger, Noah and Fukasawa, Ricardo},
  journal={Mathematical Programming},
  pages={1--43},
  year={2025},
  publisher={Springer}
}

@article{chestnut2017interdicting,
  title={Interdicting structured combinatorial optimization problems with $\{$0, 1$\}$-objectives},
  author={Chestnut, Stephen R and Zenklusen, Rico},
  journal={Mathematics of Operations Research},
  volume={42},
  number={1},
  pages={144--166},
  year={2017},
  publisher={INFORMS}
}

@article{wollmer1964removing,
  title={Removing arcs from a network},
  author={Wollmer, Richard},
  journal={Operations Research},
  volume={12},
  number={6},
  pages={934--940},
  year={1964},
  publisher={INFORMS}
}

@article{church2004identifying,
  title={Identifying critical infrastructure: the median and covering facility interdiction problems},
  author={Church, Richard L and Scaparra, Maria P and Middleton, Richard S},
  journal={Annals of the Association of American Geographers},
  volume={94},
  number={3},
  pages={491--502},
  year={2004},
  publisher={Taylor \& Francis}
}

@article{fulkerson1977maximizing,
  title={Maximizing the minimum source-sink path subject to a budget constraint},
  author={Fulkerson, Delbert Ray and Harding, Gary C},
  journal={Mathematical Programming},
  volume={13},
  number={1},
  pages={116--118},
  year={1977},
  publisher={Springer}
}

@article{zenklusen2010matching,
  title={Matching interdiction},
  author={Zenklusen, Rico},
  journal={Discrete Applied Mathematics},
  volume={158},
  number={15},
  pages={1676--1690},
  year={2010},
  publisher={Elsevier}
}

@inproceedings{dinitz2013packing,
  title={Packing interdiction and partial covering problems},
  author={Dinitz, Michael and Gupta, Anupam},
  booktitle={International Conference on Integer Programming and Combinatorial Optimization (IPCO)},
  pages={157--168},
  year={2013},
  organization={Springer}
}

@article{ior/HolzmannS21,
  author       = {Tim Holzmann and
                  J. Cole Smith},
  title        = {The Shortest Path Interdiction Problem with Randomized Interdiction
                  Strategies: Complexity and Algorithms},
  journal      = {Operations Research},
  volume       = {69},
  number       = {1},
  pages        = {82--99},
  year         = {2021}
}

@article{eor/SmithS20,
  author       = {J. Cole Smith and
                  Yongjia Song},
  title        = {A survey of network interdiction models and algorithms},
  journal      = {European Journal of Operations Research},
  volume       = {283},
  number       = {3},
  pages        = {797--811},
  year         = {2020}
}

@article{edmonds1970,
  title={Submodular functions, matroids and certain polyhedra},
  author={Edmonds, Jack},
  journal={Combinatorial Structures and Their Applications},
  pages={69--87},
  year={1970}
}

@article{cunningham1984testing,
  title={Testing membership in matroid polyhedra},
  author={Cunningham, William H},
  journal={Journal of Combinatorial Theory, Series B},
  volume={36},
  number={2},
  pages={161--188},
  year={1984},
  publisher={Elsevier}
}

@article{ageev2004pipage,
  title={Pipage rounding: A new method of constructing algorithms with proven performance guarantee},
  author={Ageev, Alexander A and Sviridenko, Maxim I},
  journal={Journal of Combinatorial Optimization},
  volume={8},
  number={3},
  pages={307--328},
  year={2004},
  publisher={Springer}
}

@article{nemhauser1978analysis,
  title={An analysis of approximations for maximizing submodular set functions—{I}},
  author={Nemhauser, George L and Wolsey, Laurence A and Fisher, Marshall L},
  journal={Mathematical Programming},
  volume={14},
  pages={265--294},
  year={1978},
  publisher={Springer}
}

@article{ketkov2025class,
  title={On a class of interdiction problems with partition matroids: complexity and polynomial-time algorithms},
  author={Ketkov, Sergey S and Prokopyev, Oleg A},
  journal={INFORMS Journal on Computing},
  year={2025},
  publisher={INFORMS}
}

@article{NemhauserW78,
  author       = {George L. Nemhauser and
                  Laurence A. Wolsey},
  title        = {Best Algorithms for Approximating the Maximum of a Submodular Set
                  Function},
  journal      = {Mathematics of  Operations Research},
  volume       = {3},
  number       = {3},
  pages        = {177--188},
  year         = {1978}
}

@article{HS23,
title = {Matroid-constrained vertex cover},
journal = {Theoretical Computer Science},
volume = {965},
pages = {113977},
year = {2023},
issn = {0304-3975},
author = {Chien-Chung Huang and François Sellier},
}

@book{dempe2002foundations,
  title={Foundations of bilevel programming},
  author={Dempe, Stephan},
  year={2002},
  publisher={Springer}
}

@article{colson2007overview,
  title={An overview of bilevel optimization},
  author={Colson, Beno{\^\i}t and Marcotte, Patrice and Savard, Gilles},
  journal={Annals of Operations Research},
  volume={153},
  number={1},
  pages={235--256},
  year={2007},
  publisher={Springer}
}

@article{israeli2002shortest,
  title={Shortest-path network interdiction},
  author={Israeli, Eitan and Wood, R Kevin},
  journal={Networks: An International Journal},
  volume={40},
  number={2},
  pages={97--111},
  year={2002},
  publisher={Wiley Online Library}
}

@article{wood1993deterministic,
  title={Deterministic network interdiction},
  author={Wood, R Kevin},
  journal={Mathematical and Computer Modelling},
  volume={17},
  number={2},
  pages={1--18},
  year={1993},
  publisher={Elsevier}
}

@inproceedings{linhares2017improved,
  title={Improved Algorithms for MST and Metric-TSP Interdiction},
  author={Linhares, Andr{\'e} and Swamy, Chaitanya},
  booktitle={44th International Colloquium on Automata, Languages, and Programming (ICALP)},
  volume={80},
  pages={32},
  year={2017}
}

\end{document}